\newtheorem{defn}{Definition}[section]
\newtheorem{lemma}[defn]{Lemma}
\newtheorem{prop}[defn]{Proposition}
\newtheorem{theo}[defn]{Theorem}
\newtheorem{coro}[defn]{Corollary}
\newtheorem{claim}{Claim}
\def\Ric{\mathop{\rm Ric}\nolimits}
\def\Hess {\mathop{\rm Hess}\nolimits}
\def\Rm{\mathop{\rm Rm}\nolimits}
\def\det{\mathop{\rm det}\nolimits}
\def\trace{\mathop{\rm trace}\nolimits}
\def\diam{\mathop{\rm diam}\nolimits}
\def\vol{\mathop{\rm vol}\nolimits}
\def\eucl{\mathop{\rm eucl}\nolimits}
\def\dim{\mathop{\rm dim}\nolimits}
\def\vol{\mathop{\rm Vol}\nolimits}
\def\spec{\mathop{\rm Spec}\nolimits}
\def\inj{\mathop{\rm inj}\nolimits}
\def\Isom{\mathop{\rm Isom}\nolimits}
\begin{document}
\author{Alix Deruelle}
\title{Steady gradient Ricci soliton with curvature in $L^1$}
\date{\today}
\maketitle

\begin{abstract}
We characterize complete nonnegatively curved steady gradient soliton with curvature in $L^1$.
We show that there are isometric to a product $((\mathbb{R}^2,g_{cigar})\times (\mathbb{R}^{n-2}, \eucl))/\Gamma $ where $\Gamma$ is a Bieberbach group of rank $n-2$. We prove also a similar local splitting result under weaker curvature assumptions.
\end{abstract}

 \section{Introduction}\label{Intro}

A \textbf{steady  gradient Ricci soliton} is a triple $(M^n,g,\nabla f)$ where $(M^n,g)$ is a Riemannian manifold and $f$ is a smooth function on $M^n$  such that $\Ric=\Hess (f).$
It is said \textbf{complete} if the vector field $\nabla f$ is complete.

In this paper, we prove a rigidity result for steady gradient soliton of nonnegative sectional curvature with curvature in $L^1(M^n,g)$.

\begin{theo}\label{global-iso}
Let $(M^n,g,\nabla f)$ be a complete nonflat steady gradient Ricci soliton such that

\begin{itemize}
\item[(i)] $K\geq 0$, where $K$ is the sectional curvature of $g$, 
\item[(ii)] $R\in L^1(M^n,g)$.
\end{itemize}

Then  any soul of  $M^n$ has codimension $2$ and is flat. Moreover, the universal covering of $M^n$ is isometric to
$$(\mathbb{R}^2,g_{cigar})\times (\mathbb{R}^{n-2}, \eucl), $$
and $\pi_1(M^n)$ is a Bieberbach group of rank $n-2$.

\end{theo}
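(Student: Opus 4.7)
The plan is to (i) locate and analyze the soul, (ii) split off the flat factor on the universal cover via Cheeger-Gromoll, and (iii) use the $L^1$ hypothesis to force the residual factor to be the cigar.

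\textbf{Step 1: the soul is flat.} Apply the Cheeger-Gromoll soul theorem to extract a compact totally convex and totally geodesic soul $\Sigma$. Since $\Hess f = \Ric \geq 0$, the function $f$ is convex, and a convex function on a compact totally convex set must be constant; hence $f|_\Sigma \equiv$ const and the tangential component of $\nabla f$ vanishes on $\Sigma$. Using total geodesicity, a direct calculation yields $\Hess f(X, Y) = 0$ for $X, Y \in T\Sigma$ at points of $\Sigma$; by the soliton equation, $\Ric|_{T\Sigma \times T\Sigma} \equiv 0$ on $\Sigma$. Combined with $K \geq 0$, this forces $K(X, \cdot) \equiv 0$ for every $X \in T\Sigma$ at points of $\Sigma$; in particular $\Sigma$ is flat.

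\textbf{Step 2: splitting the universal cover.} Let $k = \dim \Sigma$. Pass to the universal cover $\tilde M$. Since $\Sigma$ is compact flat, $\tilde \Sigma = (\mathbb{R}^k, \eucl)$ embeds totally convexly and totally geodesically in $\tilde M$, so every Euclidean line in $\tilde\Sigma$ is a line in $\tilde M$. Iterating the Cheeger-Gromoll splitting theorem (valid since $\Ric \geq 0$) produces an isometric decomposition $\tilde M = (\mathbb{R}^k, \eucl) \times N$ with $N$ complete, simply connected, nonnegatively curved, and line-free. The product structure and $\Hess f = \Ric$ force $f(x, y) = \langle a, x\rangle + b(y)$ with $a \in \mathbb{R}^k$ constant, so $(N, g_N, \nabla b)$ is itself a complete steady gradient Ricci soliton with $K \geq 0$. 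Applying Step 1 inside $N$: the soul of $N$ is compact, flat, and simply connected, hence a point. Thus $N$ is diffeomorphic to $\mathbb{R}^{n-k}$, $b$ has a unique critical point (its minimum $p^*$), and by Hamilton's strong maximum principle $\Ric_N > 0$ and $R_N > 0$ everywhere on $N$.

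\textbf{Step 3: the $L^1$ condition forces $\dim N = 2$.} A fundamental domain for the cocompact Bieberbach action has finite volume, so $R_N \in L^1(N)$. The divergence theorem on the geodesic ball $B_r \subset N$ centered at $p^*$ gives
\[
\int_{\partial B_r} \langle \nabla b, \nu \rangle \, d\sigma \;=\; \int_{B_r} R_N \, d\vol \;\leq\; \int_N R_N \, d\vol \;<\; \infty .
\]
The soliton identity $R_N + |\nabla b|^2 = C > 0$ gives the upper bound $|\nabla b| \leq \sqrt{C}$. For the lower bound, note that $R_N \to 0$ at infinity (monotonicity of $R_N$ along the $\nabla b$-flow together with the $L^1$ condition), so $|\nabla b| \to \sqrt C$ at infinity and $b$ grows at least linearly in the distance to $p^*$; convexity of $b$ and the subgradient inequality then give $\langle \nabla b, \nu\rangle \geq c > 0$ for $r$ large. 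Sandwiching, $|\partial B_r|$ stays uniformly bounded. A simply connected nonnegatively curved manifold with soul a point and uniformly bounded sphere area must have dimension at most $2$, so $\dim N = 2$. Hamilton's classification of $2$-dimensional complete nonflat steady gradient solitons identifies $N$ isometrically with $(\mathbb{R}^2, g_{cigar})$; thus $k = n-2$, the universal cover splits as announced, and $\pi_1(M)$ is a Bieberbach group of rank $n-2$.

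\textbf{Main obstacle.} The crux is Step 3: ruling out $\dim N \geq 3$ from the uniform bound on $|\partial B_r|$. Intuitively, bounded sphere area implies collapse of the cross-sections at infinity, which in the nonnegatively curved simply connected setting should yield an additional de Rham line factor, contradicting the line-freeness of $N$ unless $\dim N \leq 2$; making this rigorous (and handling potential non-collapsing alternatives carefully) is the essential technical hurdle.
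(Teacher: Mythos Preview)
Your approach inverts the paper's order: the paper first proves $\dim\ker\Ric=n-2$ by analysing the level hypersurfaces $M_t=f^{-1}(t)$ directly on $M$, and only afterwards identifies the soul and its codimension; you instead split off $\mathbb{R}^k$ via the soul and then try to pin down $k=n-2$ by showing the residual factor $N$ is $2$-dimensional. Steps~1 and~2 are essentially sound (note incidentally that $a=0$, since the lift of $f$ is bounded below). The genuine gap is Step~3, which you yourself flag: the heuristic ``bounded sphere area $\Rightarrow$ collapse $\Rightarrow$ a de Rham line'' is not a proof, and in fact is not the mechanism that works.

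What actually closes the gap is the paper's central technique, transplanted to $N$. Work with the level sets $N_t=b^{-1}(t)$ (not geodesic spheres): they are compact convex hypersurfaces with $K_{g_t}\ge 0$ by the Gauss equation, uniformly bounded $(n{-}1)$-volume (first variation plus $R_N\in L^1$), uniformly bounded diameter (positive injectivity radius on $N$ together with a packing argument), and uniformly controlled $\nabla^{g_t,i}\Rm(g_t)$ with $\arrowvert\Rm(g_t)\arrowvert\to 0$ (Shi estimates plus the Gauss equations). Cheeger--Gromov compactness then produces a smooth compact flat limit, so every $N_t$ is \emph{diffeomorphic} to a compact flat manifold. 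But $N$ is simply connected with soul a point, hence $N_t\cong S^{m-1}$; a sphere of dimension $\geq 2$ is simply connected and cannot be diffeomorphic to a compact flat manifold (the latter has infinite $\pi_1$). Thus $m\le 2$, so $\dim N=2$. (On $M$ itself the paper cannot conclude $M_t\cong S^{n-1}$; instead it uses Bieberbach plus Bochner to make $g_t$ metrically flat and then reads off $\dim\ker\Ric=n-2$ from the Gauss equations.) Either route, the Cheeger--Gromov compactness of the level hypersurfaces is the indispensable ingredient you are missing; the geodesic-sphere/collapse picture does not replace it.

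Two smaller points. Your one-line justification of $R_N\in L^1(N)$ hides the key fact that $D\times N$ is a fundamental domain for $\pi_1(M)$ on $\mathbb{R}^k\times N$; this holds because $f$ descends to $M$, so $b$ is $\Gamma$-invariant, hence the $\Gamma$-action on $N$ fixes the unique minimum $p^*$ of $b$, making the projection $\Gamma\to\Isom(\mathbb{R}^k)$ injective and free. And $R_N\to 0$ at infinity is cleanest via the paper's argument: $\nabla R_N=-2\Ric_N(\nabla b)$ is bounded, so $R_N$ is Lipschitz; combined with positive injectivity radius and $R_N\in L^1$ this forces $R_N\to 0$.
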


This result is relevant in dimensions greater than two. Indeed, the cigar soliton is the only two-dimensional nonflat steady gradient soliton, see [\ref{Chow}] for a proof.
Moreover, condition (i) is always true for an ancient solution of dimension $3$ with bounded curvature on compact time intervals because of the Hamilton-Ivey estimate ([\ref{Chow}], Chap.6, Section 5 for a detailed proof). Hence the following corollary.

\begin{coro}\label{global-iso-3}
Let $(M^3,g,\nabla f)$ be a complete nonflat steady gradient Ricci soliton such that
$$R\in L^1(M^3,g).$$
Then $(M^3,g)$ is isometric to
$$((\mathbb{R}^2,g_{cigar})\times \mathbb{R})/\langle(t,\theta,u)\rightarrow (t,\theta+\alpha,u+a)\rangle,$$
with $(\alpha,a)\in\mathbb{R}\times \mathbb{R}^*$.
\end{coro}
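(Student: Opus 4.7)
The plan is to verify the hypotheses of Theorem~\ref{global-iso} in this dimension and then to classify the deck transformations of the universal cover. First, a steady gradient Ricci soliton $(M^3, g, \nabla f)$ is in particular an ancient Ricci flow: the diffeomorphisms $\varphi_t$ generated by $-\nabla f$ produce $g(t) = \varphi_t^\ast g$, isometric to $g$ for every $t$, so the resulting Ricci flow has bounded curvature on compact time intervals. The Hamilton--Ivey pinching estimate cited in the introduction then forces $K \geq 0$ in dimension three. Hypothesis~(ii) being given, Theorem~\ref{global-iso} applies: $\tilde M^3$ is isometric to $(\mathbb{R}^2, g_{cigar}) \times (\mathbb{R}, \eucl)$, and $\pi_1(M^3)$ is a Bieberbach group of rank one, hence isomorphic to $\mathbb{Z}$.

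Next, I would identify the action of a generator $\gamma$ of $\pi_1(M^3)$. Because the cigar factor is irreducible (its holonomy is $SO(2)$ and it is nonflat) while the $\mathbb{R}$-factor is flat, the de Rham decomposition of $\tilde M^3$ is canonical, and every isometry of $\tilde M^3$ preserves the factorization; thus $\gamma = (\phi, \psi)$ with $\phi \in \Isom(\mathbb{R}^2, g_{cigar})$ and $\psi \in \Isom(\mathbb{R})$. The scalar curvature of the cigar attains its unique maximum at the tip, so $\phi$ fixes the tip and acts as an element of $O(2)$; in the rotationally symmetric coordinates $(t, \theta)$ this reduces to $\theta \mapsto \theta + \alpha$. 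Freeness of the $\gamma$-action then forces $\psi$ to be fixed-point-free on $\mathbb{R}$; since reflections of $\mathbb{R}$ always admit a fixed point, $\psi$ must be a translation $u \mapsto u + a$ with $a \neq 0$. This yields $\gamma:(t,\theta,u) \mapsto (t, \theta+\alpha, u+a)$ as required.

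The main obstacle I anticipate is the rigorous exclusion of orientation-reversing components of $\gamma$, namely reflections in the cigar factor, since these would in principle give rise to further quotients not listed in the statement. This is handled either by restricting to the orientable case or by observing that a reflection in $O(2)$ combined with a translation in the $\mathbb{R}$-factor would be incompatible with the rank-one Bieberbach presentation (up to passing to an orientation double cover, one recovers the rotation case). The remaining ingredients — Hamilton--Ivey as a cited black box, the splitting of isometries via de Rham, and the constraint $a \neq 0$ from the free action combined with $\phi$'s fixing the tip — are essentially routine.
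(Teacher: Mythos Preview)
Your proposal is correct and follows exactly the paper's approach: the paper derives the corollary immediately from Theorem~\ref{global-iso} via the Hamilton--Ivey estimate, as stated in the sentence preceding the corollary, and gives no further argument. Your explicit classification of the deck transformations (including the flag on the reflection case) in fact goes beyond what the paper supplies, since the paper leaves that step entirely implicit.
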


We make some remarks about Theorem \ref{global-iso}.
First we recall the definition of the \textbf{cigar soliton} discovered by Hamilton.
 The cigar metric on  $\mathbb{R}^2$, in special radial coordinates, is $g_{cigar}:=ds^2+\tanh^2 sd\theta^2.$
A standard  calculation shows that $R(g_{cigar})=16/(e^s+e^{-s})^{-2}.$
The curvature is positive and decreases exponentially, moreover this metric is asymptotic to a cylinder of radius $1$, therefore $R\in L^1(\mathbb{R}^2, g_{cigar}).$

Theorem \ref{global-iso} can be seen as a gap theorem in the terminology of Greene-Wu for the curvature decay of steady gradient solitons. In fact, one could assume in Theorem \ref{global-iso} that the scalar curvature decays faster than $1/r^{1+\epsilon}$ for $\epsilon>0$ instead of $R\in L^1(M^n,g)$. The proof is quite the same. Then the result is that the curvature decays exponentially. In this way, let us mention a result of Greene and Wu [\ref{Greene-Wu}] on nonnegatively curved spaces which are flat at infinity.

\begin{theo}[Greene-Wu]\label{flat-infinity}
Let $M^n$ be a complete noncompact Riemannian manifold of nonnegative sectional curvature. If $M^n$ is flat at infinity, then either $(a)$ $M^n$ is flat or $(b)$ any soul of $M^n$ is flat and has codimension $2$, the universal covering of $M^n$ splits isometrically as $\mathbb{R}^{n-2}\times \Sigma_0$ where $\Sigma_0$ is diffeomorphic to $\mathbb{R}^2$ and is flat at infinity but not flat everywhere, and finally the fundamental group of $M^n$ is a Bieberbach group of rank $n-2$.
\end{theo}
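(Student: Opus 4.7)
The natural approach is to pass to the universal cover $\tilde M$, which is simply connected, complete, nonnegatively curved, and flat outside a compact set. Applying the Cheeger--Gromoll splitting theorem iteratively, I would write $\tilde M \cong \mathbb{R}^k \times N$ isometrically, where $N$ is simply connected, complete, nonnegatively curved, flat outside a compact set, and contains no line. The problem reduces to proving $\dim N \in \{0,2\}$: the case $\dim N = 0$ makes $\tilde M$ and hence $M$ flat (case (a)), while $\dim N = 2$ produces the nonflat surface $\Sigma_0 := N$.

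The case $\dim N = 1$ is excluded immediately since $\mathbb{R}$ itself is a line. The crux is to rule out $\dim N \geq 3$. Here I would use that in dimension at least three, $N$ is simply connected at infinity, so the flat end develops isometrically onto an open connected simply connected subset of $\mathbb{R}^{\dim N}$. Inside this Euclidean end one can find arbitrarily long Euclidean straight segments whose endpoints escape to infinity in opposite directions; passing to a limit and applying Toponogov's rigidity, one extracts a complete minimizing geodesic on $\mathbb{R}$, i.e. a line in $N$. This contradicts the no-line assumption, forcing $\dim N \leq 2$.

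With $\dim N = 2$ established, the splitting $\tilde M \cong \mathbb{R}^{n-2} \times \Sigma_0$ is the content of case (b). The factor $\Sigma_0$ is nonflat (otherwise $\tilde M$ and $M$ would be flat), flat outside a compact set by construction, and diffeomorphic to $\mathbb{R}^2$ since having no line forces its soul to be a single point. Because $\mathbb{R}^{n-2}$ is the maximal flat de~Rham factor of $\tilde M$, every isometry of $\tilde M$ respects the product decomposition, so $\pi_1(M)$ embeds into $\Isom(\mathbb{R}^{n-2}) \times \Isom(\Sigma_0)$. The factor $\Isom(\Sigma_0)$ is compact because any isometry of $\Sigma_0$ must preserve the (compactly supported) curvature distribution and fix a soul point, landing it inside a stabilizer $\subset O(2)$. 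Projecting $\pi_1(M)$ to $\Isom(\mathbb{R}^{n-2})$ is therefore injective (any element in the kernel would fix every point $(x,p)\in\mathbb{R}^{n-2}\times\Sigma_0$ with $p$ a soul point, contradicting freeness), and the image acts properly discontinuously and cocompactly with quotient the $(n-2)$-dimensional compact flat soul of $M$. Bieberbach's theorems then identify $\pi_1(M)$ as a Bieberbach group of rank $n-2$.

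The main obstacle is the dimension bound $\dim N \leq 2$. It requires transferring geometric information from the flat end back through the compact curved core in order to produce a genuine line, via Toponogov rigidity applied to comparison triangles whose far side sits deep in the flat region. The delicate point is guaranteeing that the limiting geodesic is complete and minimizing on all of $\mathbb{R}$, since a priori it could fail to be minimizing when crossing the nonflat core; handling this requires the full strength of the Toponogov equality case combined with a careful Busemann-function analysis to exclude partial deformations of the limiting ray into the curved interior.
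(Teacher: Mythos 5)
First, a point of order: the paper contains no proof of this statement. It is quoted as a background result of Greene and Wu [\ref{Greene-Wu}], and the only place its proof enters the paper is in Claim 2 of the proof of Theorem \ref{global-iso}, where the codimension argument is borrowed. So your proposal can only be measured against the Greene--Wu argument and against that adaptation. Your overall skeleton (split off the maximal Euclidean de Rham factor, reduce to a factor $N$ without lines, bound $\dim N$, then run Bieberbach on $\pi_1$) is reasonable, but two steps are genuinely incomplete. The first is your opening reduction: the universal cover $\tilde M$ is in general \emph{not} flat outside a compact set. If $M$ is flat outside a compact set $C$, the nonflat locus of $\tilde M$ is the full preimage of the nonflat locus of $M$, which is noncompact whenever $\pi_1(M)$ is infinite --- and that is precisely the situation of conclusion $(b)$, where $\pi_1(M)$ is a Bieberbach group of rank $n-2\ge 1$: in the model $\tilde M=\mathbb{R}^{n-2}\times\Sigma_0$ the nonflat locus is $\mathbb{R}^{n-2}\times(\mbox{curved core of }\Sigma_0)$. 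What you actually need is that the no-lines factor $N$ has compact nonflat locus, and that requires an argument (for instance via the invariance of the de Rham splitting under deck transformations and the compactness of $\Isom(N)$); Greene and Wu sidestep this by working on $M$ itself, with a soul and the structure of its flat end, and passing to $\tilde M$ only at the end.

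The second and more serious gap is the heart of the theorem: excluding $\dim N\ge 3$, equivalently souls of codimension $\ge 3$. Your proposal does not prove this; it describes a strategy and then flags exactly the point where it breaks. A straight segment of length $2R$ sitting in the developed flat end is a geodesic of $N$, but the developing map is only a local isometry, so nothing guarantees its endpoints are at distance $2R$ in $N$ (a path through the curved core could a priori be shorter); hence the limit of such segments need not be a line, and Toponogov rigidity has nothing to bite on until minimality is established. Since producing a line is the entire content of the step, acknowledging this as ``the delicate point'' leaves the proof unfinished. The two standard ways to close it are of a different nature from your sketch: (i) a fundamental-group count, as in the paper's Claim 2 --- if the soul $S$ had codimension $\ge 3$, the fibers of the unit normal sphere bundle would be simply connected, so $\pi_1$ of the cross-section of the end would equal $\pi_1(S)$, whose rank over $\mathbb{Z}$ is $\dim S\le n-3$, incompatible with that cross-section carrying (up to finite cover) the topology of a flat $(n-1)$-manifold of rank $n-1$; or (ii) a volume rigidity argument --- codimension $\ge 3$ makes the end simply connected at infinity, so the flat end develops injectively onto the complement of a compact set in $\mathbb{R}^n$, forcing Euclidean volume growth, and the equality case of Bishop--Gromov (Greene--Wu's gap theorem) then forces $M$ to be flat, i.e. case $(a)$. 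Either route should replace the line-construction. The final $\pi_1$ analysis you give is essentially sound once these two gaps are filled.
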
 

The idea of the proof of Theorem \ref{global-iso} consists in analysing the volume and the diameter of the level sets of $f$ where $f$ is seen like a Morse function. By a blow-up argument inspired by Perelman's proof of classification of $3$-dimensional shrinking gradient Ricci soliton [\ref{Perelman}], we show that the level sets of $f$ are diffeomorphic to flat compact manifolds. In fact, by Bochner's theorem, the level sets of $f$ are metrically flat. From this, the global splitting of the universal cover is easily obtained. Finally, we follow closely the arguments of the proof of Theorem \ref{flat-infinity} to show that any soul has codimension $2$.\\

\textbf{Organization.} In section \ref{section1}, we recall basic equations for steady solitons, and study the topological structure at infinity. We establish some volume and diameter estimates of the level sets of $f$. In section \ref{section2}, we prove Theorem \ref{global-iso}, and, under weaker curvature assumptions, a local splitting result (cf. Theorem \ref{local-iso}). We give also a rigidity result on steady breathers. In section \ref{section3}, we make some further remarks about the link between the volume growth and the scalar curvature decay of a steady soliton, 
and give necessary geometric conditions to have a positive Perelman's functional  on steady gradient solitons.\\

\textbf{Acknowledgements.}
I would like to thank my advisor Laurent Bessières for his encouragement and his precious enlightenment on this problem.

 \section{Geometry and topology of the level sets of $f$}\label{section1}
 
 We begin by recalling the link   ([\ref{Chow}], Chap.4)  between steady gradient solitons and the Ricci flow.

\begin{theo}
If $(M^n,g,\nabla f)$ is a complete steady gradient Ricci soliton then there exist a solution $g(t)$ to the Ricci flow with $g(0)=g$, a family of diffeomorphisms $(\psi_t)_{t\in\mathbb{R}}$ with $\psi_0=Id_{M^n}$ and functions $f(t)$ with $f(0)=f$ defined for all $t\in \mathbb{R}$ such that

\begin{itemize}
\item[(i)] $\psi_t:M^n\rightarrow M^n$ is the 1-parameter group of diffeomorphisms generated by $-\nabla^g f$,
\item[(ii)] $g(t)={\psi_t}^*g$, i.e. for all $t\in \mathbb{R}$, i.e. $g(t)$ is isometric to $g$,
\item[(iii)] $f(t)={\psi_t}^*f$, for all $t\in \mathbb{R}$.
\end{itemize}
\end{theo}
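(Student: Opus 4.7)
The plan is to observe that the theorem is the standard fact that any complete gradient soliton generates a self-similar Ricci flow, and the proof is essentially a one-line computation built on: (a) the completeness of $\nabla f$, (b) the general identity $\mathcal{L}_{\nabla f}g=2\Hess f$, (c) the soliton equation $\Hess f=\Ric$, and (d) the naturality of the Ricci tensor under diffeomorphisms.

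First I would use hypothesis that $\nabla^g f$ is complete to produce the one-parameter group $(\psi_t)_{t\in\mathbb{R}}$ of diffeomorphisms generated by $-\nabla^g f$, with $\psi_0=\mathrm{Id}_{M^n}$. This immediately gives (i). I would then \emph{define} $g(t):=\psi_t^*g$ and $f(t):=\psi_t^*f=f\circ\psi_t$, so that conditions (ii) and (iii) of the theorem become identities by definition, and what actually requires verification is that $g(t)$ so defined solves the Ricci flow with $g(0)=g$.

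Next I would compute, for fixed $t$,
\begin{equation*}
\partial_t g(t)=\partial_t\,\psi_t^*g=\psi_t^*\bigl(\mathcal{L}_{-\nabla^g f}\,g\bigr)=-\psi_t^*\bigl(\mathcal{L}_{\nabla^g f}\,g\bigr).
\end{equation*}
Using the standard identity $\mathcal{L}_{\nabla^g f}\,g=2\Hess_g f$, the soliton equation $\Hess_g f=\Ric(g)$, and the naturality of $\Ric$ under diffeomorphisms ($\psi_t^*\Ric(g)=\Ric(\psi_t^*g)$), this becomes
\begin{equation*}
\partial_t g(t)=-2\,\psi_t^*\Ric(g)=-2\,\Ric(\psi_t^*g)=-2\,\Ric(g(t)).
\end{equation*}
Since $\psi_0=\mathrm{Id}$, we also have $g(0)=g$, so $g(t)$ is the claimed Ricci flow solution.

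There is really no main obstacle here: the argument is a routine verification rather than a substantive proof, and the only points requiring mild attention are (1) fixing the sign convention so that $\psi_t$ is generated by $-\nabla f$ (the opposite sign would give the backward flow), and (2) invoking the naturality of $\Ric$, which lets us interchange pullback and Ricci tensor. The analogous computation $f(t)=\psi_t^*f$, together with pulling back the soliton identity $\Ric(g)=\Hess_g f$ by $\psi_t$, also shows that $(M^n,g(t),\nabla^{g(t)}f(t))$ remains a steady gradient soliton for every $t\in\mathbb{R}$, which is the natural companion statement to (iii).
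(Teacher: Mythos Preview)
Your argument is correct and is exactly the standard verification that a complete steady gradient soliton generates a self-similar eternal Ricci flow. Note, however, that the paper does not give its own proof of this statement: it is simply recalled from the literature (Chow--Lu--Ni, Chap.~4), so there is no paper's proof to compare against; your write-up is precisely the computation one finds in that reference.
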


Therefore, a steady soliton is an ancient solution to the Ricci flow, i.e. defined  on an interval $]-\infty,\omega)$, where $\omega$ can be $+\infty$. Let us quote from ([\ref{Chow}],Chap. 2; Lemma 2.18) a result which follows from the strong and weak maximum principles for heat-type equations:

\begin{lemma}[Nontrivial ancient solutions have positive scalar curvature ]
If $(M^n,g(t))$ is a complete ancient solution (i.e. $(M^n,g(t))$ is complete for all $t \in (-\infty,\omega)$) to the Ricci flow with bounded curvature on compact time intervals then, either $R(g(t))>0$  for all $t\in (-\infty,\omega)$, either  $\Ric(g(t))=0$  for all $t\in (-\infty,\omega)$.
\end{lemma}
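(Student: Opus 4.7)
The plan is to exploit the scalar curvature evolution equation under Ricci flow,
$$\partial_t R = \Delta R + 2|\Ric|^2,$$
together with the trace bound $|\Ric|^2 \geq R^2/n$, which gives the differential inequality
$$\partial_t R \geq \Delta R + \frac{2}{n} R^2.$$
The argument naturally splits into two steps: first show that $R \geq 0$ everywhere on $M^n \times (-\infty,\omega)$, and then apply the strong maximum principle to get the dichotomy.

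For the first step I would argue by contradiction. Suppose $R_{\min}(t_0) := \inf_{M^n} R(\cdot,t_0) = \rho < 0$ for some $t_0$. The weak maximum principle applied to the inequality above—valid on the noncompact manifold $M^n$ thanks to the assumption of bounded curvature on compact time intervals (Hamilton/Chen--Zhu's version of the maximum principle for Ricci flow)—shows that $R_{\min}(t)$ is a supersolution of the ODE $\dot u = (2/n) u^2$. In particular $R_{\min}$ is nondecreasing in $t$, so $R_{\min}(t)\leq \rho < 0$ for all $t\leq t_0$. On this interval, integrating $\frac{d}{dt}\bigl(-1/R_{\min}\bigr) \geq 2/n$ backwards from $t_0$ yields
$$\frac{1}{R_{\min}(t)} \geq \frac{1}{\rho} + \frac{2}{n}(t_0-t).$$
At $t = t_0 - n/(2|\rho|)$ the right-hand side vanishes, contradicting $R_{\min}(t) < 0$. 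Since the ancient solution must extend to all past times, this rules out any negative value of $R_{\min}$, so $R \geq 0$.

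For the second step, suppose that $R$ is not strictly positive and choose $(x_0,t_0)$ with $R(x_0,t_0)=0$. As $R$ is a nonnegative supersolution of the linear heat equation $\partial_t R \geq \Delta R$, the strong maximum principle on each precompact domain (combined with a standard exhaustion using bounded curvature) forces $R \equiv 0$ on $M^n \times (-\infty, t_0]$. Substituting back into the evolution equation gives $|\Ric|^2 \equiv 0$ on this slab, hence $\Ric(g(t)) = 0$ for every $t \leq t_0$. Forward uniqueness of complete Ricci flow with bounded curvature (Hamilton, Chen--Zhu) then propagates the stationary Ricci-flat solution to all of $(-\infty,\omega)$, yielding the dichotomy.

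The main obstacle is making the maximum principle rigorous on the complete noncompact manifold, where the infimum $R_{\min}(t)$ is not a priori attained and need not even be a smooth function of time. The bounded-curvature-on-compact-time-intervals hypothesis is exactly what is required to treat $R_{\min}$ as a locally Lipschitz function of $t$ satisfying the ODE comparison in the barrier sense, and to run the strong maximum principle on each precompact piece. The mechanism driving the whole argument is the asymmetry of the reaction term $2|\Ric|^2 \geq (2/n) R^2$: integrated backward in time, it forces any hypothetical negative value of $R$ to blow up in finite past time, which is incompatible with ancient-ness.
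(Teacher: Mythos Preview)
Your proof is correct and follows exactly the route the paper indicates: the lemma is quoted from [\ref{Chow}, Chap.~2, Lemma~2.18] without proof, the paper merely noting that it ``follows from the strong and weak maximum principles for heat-type equations.'' Your argument---the ODE comparison from $\partial_t R \geq \Delta R + (2/n)R^2$ to force $R\geq 0$ via ancient-ness, followed by the strong maximum principle and forward uniqueness to obtain the dichotomy---is precisely the standard proof behind that citation.
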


Next, we collect the basic identities satisfied by a steady gradient soliton  ([\ref{Chow}], Chap.4). 

\begin{lemma}\label{id}
Let $(M^n,g,\nabla f)$ be a complete steady gradient soliton. Then:

\begin{itemize}
\item[(i)] $\Delta f = R$, where $\Delta :=\trace_g\Hess$,
\item[(ii)]$\nabla R+ 2\Ric(\nabla f)=0$,
\item[(iii)]$\arrowvert \nabla f \arrowvert^2+R=Cst.$
\end{itemize}
\end{lemma}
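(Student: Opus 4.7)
The plan is to derive the three identities in order, each building on the previous, using only the defining equation $\Ric=\Hess(f)$ together with standard Riemannian-geometric identities (trace, commutator of covariant derivatives, twice-contracted Bianchi).

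For (i), I would simply trace the soliton equation with respect to $g$: since $\trace_g(\Ric)=R$ and $\trace_g(\Hess f)=\Delta f$ by definition, the identity $\Delta f = R$ is immediate.

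For (ii), the idea is to compute $\mathop{\rm div}\nolimits(\Ric)$ in two different ways. On the one hand, the twice-contracted second Bianchi identity gives $\mathop{\rm div}\nolimits(\Ric)=\tfrac{1}{2}\nabla R$. On the other hand, the soliton equation yields $\mathop{\rm div}\nolimits(\Ric)=\mathop{\rm div}\nolimits(\Hess f)$, and the Bochner-type commutation formula $\nabla^j\nabla_j\nabla_i f=\nabla_i\Delta f+R_{ij}\nabla^j f$ rewrites this as $\nabla(\Delta f)+\Ric(\nabla f)$. Substituting $\Delta f=R$ from (i), equating the two expressions, and simplifying yields $\nabla R+2\Ric(\nabla f)=0$.

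For (iii), I would differentiate $|\nabla f|^2$: at a point,
\begin{equation*}
\nabla_X |\nabla f|^2 = 2\,\Hess f(\nabla f, X)=2\,\Ric(\nabla f, X) = -\nabla_X R,
\end{equation*}
where the middle equality uses the soliton equation and the last uses (ii). Hence $\nabla(|\nabla f|^2+R)\equiv 0$, and since $M^n$ is connected the function $|\nabla f|^2+R$ is constant.

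There is no real obstacle here; each step is a routine application of the soliton equation combined with a standard identity. The only point requiring any care is the commutator formula in (ii), but it is the textbook version for the rough Laplacian on a gradient. Overall this is a short computational lemma that sets up the Morse-theoretic analysis of the level sets of $f$ used later.
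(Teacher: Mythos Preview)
Your argument is correct and complete; each of the three steps is the standard derivation. The paper itself does not give a proof of this lemma at all --- it simply states the identities and cites Chow--Lu--Ni, Chap.~4, so your write-up actually supplies more detail than the original.
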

 
 By the third identity, a  steady  soliton  with bounded curvature is always complete because  $\arrowvert\nabla f\arrowvert$ is bounded. In the sequel, we only consider steady gradient Ricci solitons with positive scalar curvature and bounded curvature. Such solitons are necessarily noncompact.

\begin{lemma}[Topological structure at infinity]\label{structure}
Let $(M^n,g,\nabla f)$ be a steady gradient soliton such that $\Ric\geq 0$ with $R>0$.
Suppose that,
$$\lim_{+\infty}R=0.$$
Then $R$ attains its supremum, $R_{max}$, at a point $p$, and on $M^n$,
$$\arrowvert \nabla f \arrowvert^2+R=R_{max}.$$ 
Moreover, $f$ attains its minimum at $p$ and there exist constants $c_i=c_i(M^n, f,R)$ ($i=1...6$) such that
\begin{itemize}
\item[(i)] $\arrowvert\nabla f\arrowvert\leq c_1$, on $M^n$,
\item[(ii)] $c_2\leq\arrowvert\nabla f\arrowvert$, at infinity,
\item[(iii)]$c_3r_p(x)+c_4\leq f(x)\leq c_5r_p(x)+c_6$ at infinity, where $r_p$ is the distance function centered at $p$.
\end{itemize}
In particular,  $M^n$ has finite topological type.
\end{lemma}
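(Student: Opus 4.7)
The idea is to first locate the maximum of $R$, identify it as a critical point of $f$, and then deduce the various bounds via gradient-flow arguments.

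Since the quantity $|\nabla f|^2 + R$ is a constant $C$ by Lemma \ref{id}(iii), $R$ is bounded above by $C$, and the hypothesis $\lim_{+\infty}R = 0$ forces every superlevel set $\{R \geq \epsilon\}$ to be compact. Hence $R$ attains its supremum $R_{max}$ at some $p \in M^n$. The crucial step is to show that $\nabla f(p)=0$, which will force $C = R_{max}$. I would argue by contradiction: suppose $\nabla f(p) \neq 0$ and consider the backward trajectory $\psi_t(p)$ of $-\nabla f$, defined for all $t \geq 0$ by completeness. Along it, Lemma \ref{id}(ii) combined with $\Ric \geq 0$ yields $\tfrac{d}{dt}R(\psi_t(p)) = 2\Ric(\nabla f, \nabla f)(\psi_t(p)) \geq 0$; since $R(p) = R_{max}$ is already the global maximum, $R$ must be constantly equal to $R_{max}$ along the trajectory, and consequently $|\nabla f|^2 = C - R_{max} > 0$ is also constant. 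Therefore $\psi_t(p)$ moves at a constant positive speed inside the compact set $\{R = R_{max}\}$, while $f(\psi_t(p)) = f(p) - (C - R_{max}) t \to -\infty$; this contradicts the boundedness of $f$ on a compact set.

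Once $\nabla f(p)=0$ is established, the remaining claims follow more easily. As $\Hess f = \Ric \geq 0$, $f$ is convex along geodesics; being critical at $p$, one has $f \geq f(p)$ everywhere, so $p$ is a global minimum. Bound (i) with $c_1 = \sqrt{R_{max}}$ is immediate from $|\nabla f|^2 = R_{max} - R \leq R_{max}$, and bound (ii) with $c_2 = \sqrt{R_{max}/2}$ follows from $R \to 0$: outside the compact set $\{R \geq R_{max}/2\}$, $|\nabla f|^2 \geq R_{max}/2$. The upper bound in (iii) is obtained by integrating $|\nabla f| \leq c_1$ along a minimizing geodesic from $p$ to $x$.

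The delicate point is the linear lower bound in (iii). For $x$ at distance $r_p(x) \gg 1$ from $p$, I would run the backward flow $\psi_t(x)$ and let $T$ be the first time it enters a fixed compact ball $B_{R_0}(p)$ containing the region $\{|\nabla f| < c_2\}$. Outside $B_{R_0}(p)$ the trajectory has speed $|\nabla f| \leq c_1$, hence arc-length comparison gives $T \geq (r_p(x) - R_0)/c_1$; meanwhile $f$ decreases at the rate $|\nabla f|^2 \geq c_2^2$, so $f(p) \leq f(\psi_T(x)) \leq f(x) - c_2^2 T$, yielding the desired $f(x) \geq f(p) + (c_2^2/c_1)(r_p(x) - R_0)$. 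Finite topological type is then immediate: the normalized flow of $-\nabla f/|\nabla f|^2$, which is defined outside a compact set, realizes a deformation retraction of $M^n$ onto a compact sublevel $\{f \leq s\}$, so $M^n$ has the homotopy type of a compact manifold with boundary. The main obstacle is the first step, where the constancy of $|\nabla f|^2 + R$ must be combined with a dynamical argument to force $\nabla f$ to vanish at the maximum of $R$; the remaining estimates are then routine.
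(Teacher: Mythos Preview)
Your proof is correct and follows essentially the same approach as the paper. The key step---showing $C=R_{\max}$ by running the gradient flow from the maximum point of $R$ and deriving a contradiction---is the same; you phrase the contradiction via ``the trajectory stays in the compact set $\{R=R_{\max}\}$ while $f\to-\infty$,'' whereas the paper phrases it as ``the trajectory escapes to infinity while $R$ stays equal to $R_{\max}$, contradicting $R\to 0$,'' but these are equivalent packagings of the same dynamics. Your treatment of (iii) via the backward flow is in fact more explicit than the paper's, which simply defers to the reference of Carrillo--Ni for those estimates.
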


Note that José A. Carrillo and Lei Ni [\ref{Carrillo}] have shown a similar lemma under weaker hypotheses: $\limsup_{x\rightarrow+\infty}R<\sup_{M^n}R=R(p)=R_{max}$ ($R$ is supposed to attain its supremum). To be complete, we give a short proof of this lemma following [\ref{Carrillo}].\\

\textbf{Proof of Lemma \ref{structure}}.
As the scalar curvature $R$ tends to $0$ at infinity, it attains its maximum $R_{max}>0$ at a point $p$ of $M^n$. Moreover, we know that there exists a constant $C>0$ such that $\arrowvert \nabla f \arrowvert^2+R=C.$ In particular, $C\geq R_{max}$.
Assume that $R_{max}<C$. Consider the flow $(\psi_t(p))_t$ generated by the vector field $\nabla f$. This flow is defined on $\mathbb{R}$ because $\nabla f$ is complete. define the function $F(t):=f(\psi_t(p))$ for $t\in \mathbb{R}$. Then,
$$F'(t)=\arrowvert \nabla f\arrowvert^2\quad\mbox{and}\quad F''(t)=2\Ric(\nabla f,\nabla f),$$
implicitely evaluated at the point $\psi_t(p)$.
By assumption, $F'(t)\geq C-R_{max}>0$ and $F'(0)=C-R_{max}=\min_{M^n}\arrowvert \nabla f\arrowvert^2$.
 Now $F''(t)\geq 0$ for $\Ric\geq 0$, i.e. $F'$ is a non-decreasing function on $\mathbb{R}$. So $F'$ is constant on $]-\infty,0]$ and $F(t)=(C-R_{max})t +f(p)$ for $t\leq 0$.
 In particular, $\lim_{t\rightarrow-\infty}F(t)=-\infty$. As $f$ is continue (since it is smooth!) on $M^n$, this implies that $(\psi_t(p))_t$ is not bounded for $t\leq 0$.
Therefore, there exists a subsequence $t_k\rightarrow -\infty$ such that $r_p(\psi_{t_k}(p))\rightarrow+\infty$.
Thus, $\lim_{k\rightarrow+\infty}R(\psi_{t_k}(p))=0$.
Now, $F''(t)=2\Ric(\nabla f,\nabla f)=-g(\nabla R,\nabla f)=0$ for $t\leq 0$, i.e. $R(\psi_t(p))=R_{max}>0$ for $t\leq 0$. Contradiction.
We have shown that $\arrowvert \nabla f \arrowvert^2+R=R_{max}.$
Then $\nabla f(p)=0$ and as $\Hess(f)(p)\geq 0$, $f$ attains its minimum at $p$.
We deduce that $\liminf_{x\rightarrow+\infty}\arrowvert \nabla f \arrowvert^2\geq \lambda>0$.
Finally, we can show the inequalities satisfied by $f$ as in [\ref{Carrillo}].

 \hfill$\square$\\

Remember that the critical set of a convex function is exactly the set where it attains its minimum. With the notations of the previous lemma,  we have 
$$ \{f=\min_{M^n} f\}=\{\nabla f=0\}=\{R=R_{max}\}.$$

In the following, we suppose that $\min_{M^n} f=0$.
Now, consider the compact hypersurfaces $M_t:=f^{-1}(t)$, levels of $f$, for $t$ positive. We will also denote the sublevels (resp. superlevels) of $f$ by $M_{\leq t}:=f^{-1}(]-\infty,t])$ (resp. $M_{\geq t}:=f^{-1}([t,+\infty[)$).

Let $(\phi_t)_t$ be the $1$-parameter group of diffeomorphisms generated by the vector field $\nabla f/\arrowvert\nabla f\arrowvert^2$ defined on $M^n\setminus M_0$. For $t_0>0$, $\phi_{t-t_0}$ is a diffeomorphism between $M_{t_0}$ and $M_t$ for $t\geq t_0$. Outside a compact set,  $M^n$ is diffeomorphic to $[t_0, +\infty[\times M_{t_0}$ for $t_0>0$. We suppose $n>2$.

\begin{prop}[Volume estimate]\label{vol-hyper}
Let $(M^n,g,\nabla f)$ be a complete steady gradient  soliton such that
\begin{itemize}
\item[(i)] $\Ric\geq 0$ and $R>0$,
\item[(ii)] $\lim_{+\infty}R=0.$
\end{itemize}

Then
$$0\leq A'(t)\leq c(t_0)\int_{M_t}\frac{R}{\arrowvert\nabla f\arrowvert}dA_t, \quad \forall t\geq t_0, $$ where $A(t):=Vol_{g_t}M_t$,
$g_t$ is the induced metric on $M_t$ by $g$ and $c(t_0)$ is a positive constant depending on $\inf_{M_{\geq t_0}}\arrowvert \nabla f\arrowvert$.

\end{prop}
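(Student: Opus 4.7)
The plan is to parameterize the family $(M_t)_{t\geq t_0}$ by the flow $(\phi_s)$ of the vector field $X:=\nabla f/|\nabla f|^2$ already introduced. Since $X(f)\equiv 1$, this flow sends $M_{t_0}$ diffeomorphically onto $M_{t_0+s}$, so $A(t_0+s)=\vol(\phi_s(M_{t_0}))$ and $A'$ can be read off the first variation of area.

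Because each $M_t$ is a closed hypersurface, the tangential divergence term integrates away and the first variation reduces to
$$A'(t)=\int_{M_t} H\,\langle X,\nu\rangle\,dA_t=\int_{M_t}\frac{H}{|\nabla f|}\,dA_t,$$
where $\nu=\nabla f/|\nabla f|$ is the unit normal to $M_t$ and $H$ its mean curvature with respect to $\nu$. I would then compute $H$ from the fact that the second fundamental form on $TM_t$ is $II(Y,Z)=\Hess(f)(Y,Z)/|\nabla f|$; using the soliton equation $\Hess(f)=\Ric$ together with Lemma \ref{id}(i),
$$H=\frac{\Delta f-\Hess(f)(\nu,\nu)}{|\nabla f|}=\frac{R|\nabla f|^2-\Ric(\nabla f,\nabla f)}{|\nabla f|^3}.$$

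Substituting back yields the master formula
$$A'(t)=\int_{M_t}\frac{R|\nabla f|^2-\Ric(\nabla f,\nabla f)}{|\nabla f|^4}\,dA_t,$$
from which both bounds follow. For $A'(t)\geq 0$ I would invoke the pointwise inequality $\Ric(\nabla f,\nabla f)\leq R|\nabla f|^2$: diagonalizing $\Ric$ in an orthonormal frame, nonnegativity forces the largest Ricci eigenvalue to be no more than $R=\trace(\Ric)$, making the integrand nonnegative at every point. For the upper bound I would simply drop the nonpositive Ricci term and extract one factor of $|\nabla f|$ from $|\nabla f|^2$, using that $|\nabla f|$ admits a positive infimum on $M_{\geq t_0}$ (finite by Lemma \ref{structure}(ii) at infinity, together with $|\nabla f|>0$ off $\{f=0\}$ and continuity on the compact region). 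Setting $c(t_0):=1/\inf_{M_{\geq t_0}}|\nabla f|$,
$$A'(t)\leq\int_{M_t}\frac{R}{|\nabla f|^2}\,dA_t\leq c(t_0)\int_{M_t}\frac{R}{|\nabla f|}\,dA_t.$$

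The only step carrying any content is the Rayleigh-type pointwise bound $\Ric(\nabla f,\nabla f)\leq R|\nabla f|^2$; everything else is a direct application of the first variation of area and the three soliton identities of Lemma \ref{id}, so I do not foresee any serious obstacle.
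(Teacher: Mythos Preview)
Your proof is correct and follows essentially the same route as the paper: first variation of area along the flow of $\nabla f/|\nabla f|^2$, identification of the second fundamental form as $\Ric/|\nabla f|$, and then the two bounds via $\Ric\geq 0$ and the positive lower bound on $|\nabla f|$ over $M_{\geq t_0}$. Your integrand $\dfrac{R|\nabla f|^2-\Ric(\nabla f,\nabla f)}{|\nabla f|^4}$ is exactly the paper's $\dfrac{R-\Ric(\mathbf{n},\mathbf{n})}{|\nabla f|^2}$ after factoring, and your Rayleigh-type inequality is just a rephrasing of the observation that $R-\Ric(\mathbf{n},\mathbf{n})$ equals the sum of the tangential Ricci terms and is hence nonnegative.
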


\textbf{Proof of Proposition \ref{vol-hyper}}.
The curvature assumptions allows to apply Lemma \ref{structure}. Thus, the hypersurfaces $M_t$ are well-defined for $t>0$.
The flow of the hypersurface $M_t$ satisfies $\frac{\partial\phi_t}{\partial t} = (\nabla f/\arrowvert\nabla f\arrowvert^2)(\phi_{t}).$ Therefore, the first variation formula for the area of $M_t$ is given by $$A'(t)=\int_{M_t}\frac{H_t}{\arrowvert \nabla f\arrowvert} dA,$$
where $H_t$ is the mean curvature of $M_t$.
Now the second fundamental form of $M_t$ is 
$$h_t:=\frac{\Hess (f)}{\arrowvert \nabla f\arrowvert}=\frac{\Ric}{\arrowvert \nabla f\arrowvert}.$$
So, 
$$A'(t)=\int_{M_t}\frac{R-\Ric(\textbf{n},\textbf{n})}{\arrowvert \nabla f\arrowvert^2}dA,$$
where $\textbf{n}:=\nabla f/\arrowvert \nabla f\arrowvert$ is the unit outward normal to the hypersurface $M_t$.
The first inequality comes from the nonnegativity of the Ricci curvature. 
The second one is due to the nonnegativity of the Ricci curvature and to the uniform boundedness from below of $\arrowvert \nabla f\arrowvert$ on $M_{\geq t_0}:=\{f\geq f(t_0)\}$.
\hfill$\square$\\

We deduce the following corollary by the co-area formula.

\begin{coro}\label{vol-borne}
Let  $(M^n,g,\nabla f)$ a complete steady gradient soliton satisfying the hypotheses of Proposition \ref{vol-hyper}. Then,

$$A(t_0)\leq A(t)\leq A(t_0)+c(t_0)\int_{M_{t_0\leq s\leq t}}R d\mu, \quad \forall t\geq t_0.$$

\end{coro}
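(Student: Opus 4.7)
The plan is to integrate the differential inequality of Proposition~\ref{vol-hyper} over $[t_0,t]$ and then recognize the resulting double integral via the co-area formula applied to $f$.

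The lower bound $A(t_0)\le A(t)$ is immediate from $A'\ge 0$. For the upper bound, I integrate the second inequality of Proposition~\ref{vol-hyper} from $t_0$ to $t$, obtaining
$$A(t)-A(t_0)=\int_{t_0}^t A'(s)\,ds\le c(t_0)\int_{t_0}^t\!\int_{M_s}\frac{R}{|\nabla f|}\,dA_s\,ds.$$
Next I apply the co-area formula to $f$ on the slab $M_{t_0\le s\le t}=f^{-1}([t_0,t])$: since $d\mu=|\nabla f|^{-1}\,dA_s\,ds$, the factor $|\nabla f|$ in the denominator is absorbed and
$$\int_{t_0}^t\!\int_{M_s}\frac{R}{|\nabla f|}\,dA_s\,ds=\int_{M_{t_0\le s\le t}}R\,d\mu,$$
which yields the claimed upper bound.

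The only point to check is that the co-area formula genuinely applies, namely that $|\nabla f|$ does not vanish on the slab. By Lemma~\ref{structure}, one has $|\nabla f|^2=R_{\max}-R$, so $\{|\nabla f|=0\}=\{R=R_{\max}\}=\{f=0\}$ lies strictly below the slab, and since $f$ is proper (by Lemma~\ref{structure}(iii)) the slab $f^{-1}([t_0,t])$ is compact, so $|\nabla f|$ is bounded below there by a positive constant. Given this, the argument is essentially bookkeeping and I do not expect any real obstacle.
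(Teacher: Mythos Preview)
Your proof is correct and follows exactly the route the paper intends: the paper simply states that the corollary follows from Proposition~\ref{vol-hyper} ``by the co-area formula,'' and your integration of $A'(s)$ over $[t_0,t]$ together with the identification $\int_{t_0}^t\int_{M_s}\frac{R}{|\nabla f|}\,dA_s\,ds=\int_{M_{t_0\le s\le t}}R\,d\mu$ is precisely that. Your check that $|\nabla f|>0$ on the slab (via Lemma~\ref{structure}) is the only detail needed to justify the application, and it is handled correctly.
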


Consequently, a steady gradient soliton satisfying the assumptions of Proposition \ref{vol-hyper} with $R\in L^1(M^n,g)$ has linear volume growth, i.e., for any $p\in M^n$, there exist positive constants $C_1$ and $C_2$ such that for all $r$ large enough,

$$C_1r\leq \vol B(p,r)\leq C_2r.$$

\begin{prop}[Comparison of the metrics $\phi_{t-t_0}^*g_t$ and $g_{t_0}$]\label{long}

Let $(M^n,g,\nabla f)$ be a complete steady gradient soliton satisfying the assumptions of Proposition \ref{vol-hyper}. Let $V$ be a vector field tangent to $M_{t_0}$.
Then,
$$g_{t_0}(V,V)\leq (\phi_{t-t_0}^*g_t)(V,V), \quad \forall t\geq t_0.$$

\end{prop}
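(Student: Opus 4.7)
\textbf{Proof plan for Proposition \ref{long}.}

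My plan is to differentiate $s\mapsto (\phi_s^*g)(V,V)$ for $V$ tangent to $M_{t_0}$ and show that the derivative is nonnegative; then the inequality at $s=t-t_0$ follows by integration from the initial value $g_{t_0}(V,V)$.

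The first step is to compute the Lie derivative of $g$ along the vector field $X:=\nabla f/|\nabla f|^2$ that generates $\phi_s$. Using $\Hess(f)=\Ric$ and writing $\varphi:=1/|\nabla f|^2$, for any vector fields $Y,Z$ one has
$$g(\nabla_Y X,Z)=\varphi\,\Ric(Y,Z)+Y(\varphi)\,g(\nabla f,Z),$$
so
$$(\mathcal L_X g)(Y,Z)=2\varphi\,\Ric(Y,Z)+Y(\varphi)g(\nabla f,Z)+Z(\varphi)g(\nabla f,Y).$$
The second step is the key observation: along the flow, $\phi_s$ sends $M_{t_0}$ diffeomorphically onto $M_{t_0+s}$, so if $V$ is tangent to $M_{t_0}$, then $V':=d\phi_s(V)$ is tangent to $M_{t_0+s}$ and therefore orthogonal to $\nabla f$. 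The two error terms involving $g(\nabla f,\cdot)$ then vanish, leaving
$$(\mathcal L_X g)_{\phi_s(x)}(V',V')=\frac{2\,\Ric(V',V')}{|\nabla f|^2}\ge 0$$
by the nonnegativity of the Ricci curvature (which is guaranteed by the hypotheses of Proposition \ref{vol-hyper}; note that $|\nabla f|>0$ on $M_{\geq t_0}$ by Lemma \ref{structure}, so $X$ is well defined and smooth there).

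The third step assembles these ingredients: differentiating gives
$$\frac{d}{ds}(\phi_s^*g)(V,V)=\bigl(\phi_s^*(\mathcal L_X g)\bigr)(V,V)=(\mathcal L_X g)_{\phi_s(x)}(V',V')\ge 0,$$
so $s\mapsto (\phi_s^*g)(V,V)$ is nondecreasing on $[0,t-t_0]$, and its value at $s=0$ is exactly $g_{t_0}(V,V)$. This yields the claimed inequality. I don't expect any serious obstacle here; the only delicate point is keeping track of the fact that the ``conformal factor'' term $Y(\varphi)g(\nabla f,Z)$ in $\mathcal L_X g$, which involves $\nabla R$ through the identity $|\nabla f|^2+R=R_{\max}$ and could a priori have either sign, is automatically killed by tangency to the level sets—and that is precisely what makes the monotonicity work for tangent vectors but not in general.
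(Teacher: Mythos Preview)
Your proof is correct and is essentially the same argument as the paper's: both push $V$ forward along the flow of $\nabla f/|\nabla f|^2$, differentiate the squared length, obtain $\tfrac{d}{ds}(\phi_s^*g)(V,V)=2\,\Ric(V',V')/|\nabla f|^2$, and invoke $\Ric\ge 0$. The only cosmetic difference is that the paper computes $|V(t)|'$ directly while you phrase it via the Lie derivative $\mathcal L_X g$; your version has the minor advantage of making explicit the extra terms $Y(\varphi)g(\nabla f,Z)+Z(\varphi)g(\nabla f,Y)$ and why they vanish by tangency to the level sets, a point the paper leaves implicit.
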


\textbf{Proof of Proposition \ref{long}}.
Define $V(t):=d\phi_{t-t_0}(V)$ where $V$ is a unit tangent vector to $M_{t_0}$.
Note that $V(t)$ is a tangent vector to $M_t$ by construction.
Thus,
$$\arrowvert V\arrowvert '=\frac {g(V',V)}{\arrowvert V\arrowvert}=\frac{\arrowvert V\arrowvert}{\arrowvert \nabla f\arrowvert^2} \Hess (f)(\frac{V}{\arrowvert V\arrowvert},\frac{V}{\arrowvert V\arrowvert})=
\frac{\arrowvert V\arrowvert}{\arrowvert \nabla f\arrowvert^2} \Ric(\frac{V}{\arrowvert V\arrowvert},\frac{V}{\arrowvert V\arrowvert}).$$

Hence,
$$\log\left(\begin{array}{rl}\frac{\arrowvert V\arrowvert(t)}{\arrowvert V\arrowvert(t_0)}\end{array}\right)=
\int_{t_0}^t \frac{\arrowvert V\arrowvert'(s)}{\arrowvert V\arrowvert(s)}ds=
\int_{t_0}^t \frac{1}{\arrowvert \nabla f\arrowvert^2(s)} \Ric\left(\begin{array}{rl}\frac{V(s)}{\arrowvert V\arrowvert(s)},\frac{V(s)}{\arrowvert V\arrowvert(s)}\end{array}\right)ds.$$

The inequality follows from the assumption $\Ric\geq 0$.

\hfill$\square$\\

We deduce the following corollary.

\begin{coro}[Distance comparison] \label{diam}
Let $(M^n,g,\nabla f)$  be a complete steady gradient soliton satisfying the assumptions of Proposition \ref{vol-hyper}. Then,

$$d_{g_{t_0}}\leq d_{\phi^*_{t-t_0}g_t}, \quad \forall t\geq t_0.$$

\end{coro}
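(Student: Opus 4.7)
The plan is to derive the distance comparison as an immediate pointwise-to-integrated consequence of Proposition \ref{long}. Both $g_{t_0}$ and $\phi_{t-t_0}^*g_t$ are genuine Riemannian metrics on the same compact hypersurface $M_{t_0}$: the first by restriction of $g$, the second because $\phi_{t-t_0}\colon M_{t_0}\to M_t$ is a diffeomorphism (flowing by the non-vanishing normal direction $\nabla f/|\nabla f|^2$ is well-defined on $M_{\geq t_0}$ thanks to Lemma \ref{structure}(ii)). Therefore the two associated length-space distances on $M_{t_0}$ are obtained by infimizing over the same class of piecewise-smooth curves in $M_{t_0}$.

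Concretely, I would fix two points $p,q\in M_{t_0}$ and an arbitrary piecewise-smooth curve $\gamma\colon[0,1]\to M_{t_0}$ joining them. For each $s$, the velocity $\dot\gamma(s)$ is a tangent vector to $M_{t_0}$, so Proposition \ref{long} applied to $V=\dot\gamma(s)$ at the point $\gamma(s)$ yields
\[
g_{t_0}\bigl(\dot\gamma(s),\dot\gamma(s)\bigr)\;\leq\;\bigl(\phi_{t-t_0}^*g_t\bigr)\bigl(\dot\gamma(s),\dot\gamma(s)\bigr),\qquad\forall s\in[0,1].
\]
Taking square roots and integrating in $s$ gives the length comparison $L_{g_{t_0}}(\gamma)\leq L_{\phi_{t-t_0}^*g_t}(\gamma)$.

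Passing to the infimum over all such curves $\gamma$ joining $p$ to $q$ on $M_{t_0}$ yields $d_{g_{t_0}}(p,q)\leq d_{\phi_{t-t_0}^*g_t}(p,q)$, and since $p,q$ were arbitrary this is precisely the stated inequality. There is really no obstacle here: the only thing to check is that $\phi_{t-t_0}$ is indeed a diffeomorphism from $M_{t_0}$ onto $M_t$ for every $t\geq t_0$, which is exactly what the flow of $\nabla f/|\nabla f|^2$ provides on the region $\{f\geq f(t_0)\}$, and this has already been recorded before Proposition \ref{vol-hyper}.
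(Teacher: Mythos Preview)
Your argument is correct and is exactly the standard deduction the paper has in mind: Proposition~\ref{long} gives a pointwise metric comparison $g_{t_0}\leq \phi_{t-t_0}^*g_t$ on $TM_{t_0}$, which integrates along curves to a length comparison and hence, after taking infima, to the distance inequality. The paper does not spell this out and simply records the statement as an immediate corollary of Proposition~\ref{long}.
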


\textbf{Remark.}
By the proof of Proposition \ref{long}, we also have the following upper estimate for $t\geq t_0$,
$$d_t\leq e^{\int_{t_0}^t\frac{\sup_{M_s} R}{\arrowvert \nabla f\arrowvert ^2(s)}ds}d_{t_0},$$
which will not be used in this paper.\\

  From now on, we consider the sequence of compact Riemannian manifolds $(M_{t},g_t)_{t\geq t_0}$ for $t_0>0$. In order to take a smooth Cheeger-Gromov limit of this sequence, one has to control the injectivity radius and the curvature and its derivatives of the metrics $g_t$ uniformly.
  
  \begin{lemma}[Injectivity radius of $(M^n,g)$]\label{inj}
  Let $(M^n,g,\nabla f)$ be a complete steady gradient soliton such that
  
  \begin{itemize}
\item[(i)] $\Ric\geq 0$ and $R>0$,
\item[(ii)] $\lim_{+\infty}R=0.$
\end{itemize}

Then for any $t_0>0$, 
$$\inj(M_\leq t)\geq \min\left\{\frac{\pi}{\sqrt{ K_{[t_0,t]}}},\inj(M_{\leq t_0})\right\},\quad \forall t\geq t_0,$$
where $K_{[t_0,t]}$ bounds from above the sectional curvatures of $g$ on $M_{t_0\leq s \leq t}$.\\
In particular, if $(M^n,g)$ has bounded curvature then it has positive injectivity radius.
  \end{lemma}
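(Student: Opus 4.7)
The plan is a standard Klingenberg injectivity-radius estimate, separated into two cases depending on the base point.

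If $x\in M_{\leq t_0}$, then $\inj_x(M^n)\geq \inj(M_{\leq t_0})$ is immediate from the definition of $\inj(M_{\leq t_0})$ as the pointwise infimum over $M_{\leq t_0}$, so the real work concerns $x$ in the annular region $M_{t_0\leq s\leq t}$. For such an $x$, Klingenberg's formula reads
$$
\inj_x(M^n)=\min\Bigl\{c(x),\,\tfrac12\ell(x)\Bigr\},
$$
with $c(x)$ the conjugate radius and $\ell(x)$ the length of the shortest geodesic loop based at $x$.

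For the conjugate radius I would apply Rauch's comparison theorem: as long as the geodesic from $x$ stays inside $M_{t_0\leq s\leq t}$, the bound $K\leq K_{[t_0,t]}$ yields $c(x)\geq \pi/\sqrt{K_{[t_0,t]}}$. Geodesics that leave the annular region enter $M_{\leq t_0}$, which is totally convex as a sublevel set of the convex function $f$ (since $\Hess f=\Ric\geq 0$), and inside $M_{\leq t_0}$ the conjugate radius is at least $\inj(M_{\leq t_0})$; this accounts for the second term in the minimum. For the shortest geodesic loop, I would use the gradient flow $\psi_s$ of $-\nabla f$: by the soliton--Ricci flow correspondence stated at the start of Section \ref{section1}, $g(s):=\psi_s^*g$ evolves by Ricci flow, and $\Ric\geq 0$ forces $g(s)\leq g(0)$ for $s\geq 0$, so the length in $g$ of $\psi_s(\gamma)$ equals the length of $\gamma$ measured in $g(s)$ and is therefore non-increasing in $s$. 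For $s$ large enough, $\psi_s(x)\in M_{\leq t_0}$, producing a loop there of length at most $\ell(x)$, and minimizing in its free homotopy class yields a geodesic loop at a point of $M_{\leq t_0}$ of the same or smaller length, which forces $\ell(x)\geq 2\inj(M_{\leq t_0})$.

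The main obstacle is making the loop-deformation step rigorous, in particular tracking the free homotopy class of the transported loop and excluding the degenerate case where it shrinks to a point. The final assertion follows by letting $t\to\infty$: bounded curvature gives $K_{[t_0,t]}\leq \sup_{M^n}K<\infty$ uniformly in $t$, while $\inj(M_{\leq t_0})>0$ by compactness of $M_{\leq t_0}$ in the smooth manifold $M^n$.
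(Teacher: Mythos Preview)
Your overall plan is the right one and is essentially what the paper does: control the conjugate radius by a curvature bound and push short geodesic loops inward by a length--nonincreasing flow. The paper, however, organizes this differently. It proves that the Sharafutdinov-type retraction $\Pi_{t}:M^n\to M_{\leq t}$, defined via the \emph{normalized} flow of $\nabla f/|\nabla f|^2$, is distance--nonincreasing (this is where Proposition~\ref{long} and the convexity of $f$ enter), and then invokes Sharafutdinov's Pogorelov--Klingenberg theorem directly. Packaging the monotonicity as ``$\Pi_t$ is $1$--Lipschitz'' is what rescues precisely the step you flagged as the main obstacle: once you have a genuine distance--nonincreasing map onto the totally convex set $M_{\leq t_0}$, the shortening of a geodesic loop and the passage to a geodesic loop inside $M_{\leq t_0}$ are handled by Sharafutdinov's argument, so you do not need to chase free homotopy classes or worry about the loop degenerating.

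There is also a gap in your conjugate--radius discussion that you did not flag. Your dichotomy ``either the geodesic stays in the annulus or it enters $M_{\leq t_0}$'' is incomplete: a geodesic issued from $x\in M_{t_0\leq s\leq t}$ may exit outward through $M_t$ into $M_{>t}$, where you have assumed no curvature bound. Moreover, the sentence ``inside $M_{\leq t_0}$ the conjugate radius is at least $\inj(M_{\leq t_0})$'' is not correct as stated: $\inj(M_{\leq t_0})$ bounds the conjugate radius \emph{at points of} $M_{\leq t_0}$, not along a geodesic that merely passes through $M_{\leq t_0}$ from a base point outside. In the Klingenberg scheme the term $\inj(M_{\leq t_0})$ enters only through the loop estimate; the conjugate radius is controlled solely by the Rauch bound $c(x)\geq \pi/\sqrt{K}$, and for that you need a curvature bound along the whole geodesic segment, not just on the annulus. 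This is harmless for the ``In particular'' conclusion (take the global bound $\sup_{M^n}K$), but your case splitting for the conjugate radius should be dropped.
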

  
  \textbf{Proof of Lemma \ref{inj}.}
  Let $t>0$ and define the topological retraction $\Pi_t:M^n\rightarrow M^n$ as follows: $\Pi_t(p)=p$ if $f(p)\leq t$ and $\Pi_t(p)=(\phi_{f(p)-t})^{-1}(p)$ otherwise.
  The proof consists in showing that $\Pi_t$ is a distance-nonincreasing map.
  Then one can argue as in the proof of Sharafutdinov [\ref{Sha}] to show the injectivity radius estimate.
  Therefore, we want to show that $\Pi_t$ does not increase distances, i.e.,
  $$d(\Pi_t(p_0),\Pi_t(p_1))\leq d(p_0, p_1), \quad (p_0, p_1 \in M^n).$$
  Let $p_0, p_1\in M^n$, $t_0=f(p_0)$ and $t_1=f(p_1)$.
  Assume w.l.o.g. that $t_0\leq t_1$.
  Consider three cases.
  (1) $t\geq t_1$. There is nothing to prove because $\Pi_t(p_0)=p_0$ and $\Pi_t(p_1)=p_1$.
  (2) $t_0\leq t \leq t_1$. It suffices to show that $s\rightarrow d(p_0, \phi_{s-t}(q_1))$ is a nondecreasing function for $s\geq t$ and $q_1\in M_t$. Take a minimal geodesic $\gamma$ joining $p_0$ to $\phi_{s-t}(q_1)$. Now, $f\circ \gamma$ is a convex function. Thus, $0\leq s-t_0=f(\phi_{s-t}(q_1))-f(p_0)=f(\gamma(1))-f(\gamma(0))\leq g(\nabla f(\gamma(1)), \dot{\gamma}(1)).$ This proves the result.
  (3) $t\leq t_0$. It is equivalent to show that $d(\phi_{t_0-t}(q_0),\phi_{t_1-t}(q_1))\geq d(q_0,q_1),$ for $q_0, q_1\in M_t$. By Proposition \ref{long}, $$d(q_0,q_1)\leq d(\phi_{t_0-t}(q_0),\phi_{t_0-t}(q_1)),$$  
  and by (2),
  $$d(\phi_{t_0-t}(q_0),\phi_{t_0-t}(q_1))\leq d(\phi_{t_1-t_0}(\phi_{t_0-t}(q_1)),\phi_{t_0-t}(q_0)).$$
  This gives the desired inequality.
  
\hfill$\square$\\

\begin{coro}[Diameter estimate]\label{diam-borne}
Let $(M^n,g,\nabla f)$ be a complete steady gradient soliton with bounded curvature satisfying 
\begin{itemize}
\item[(i)] $\Ric\geq 0$, $R>0$ and $\lim_{+\infty} R=0$,
\item[(ii)] $R\in L^1(M^n,g)$.
\end{itemize}
  Then, for any $t_0>0$, there exists a positive constant $D=D(t_0)$ such that,
  $$\diam(g_t)\leq D(t_0),\quad (\forall t\geq t_0).$$
\end{coro}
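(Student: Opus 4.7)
The plan is to combine the uniform volume bound coming from Corollary \ref{vol-borne}, the positive ambient injectivity radius coming from Lemma \ref{inj}, and a uniform bound on the second fundamental form of the level sets into a single packing/covering argument.

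First, I would collect the uniform estimates. By Corollary \ref{vol-borne} and the hypothesis $R \in L^1(M^n,g)$, for every $t \geq t_0$,
$$A(t) \leq A(t_0) + c(t_0) \int_{M^n} R \, d\mu =: A_\infty < \infty.$$
By Lemma \ref{inj} and the bounded curvature hypothesis, $\inj(M^n,g) \geq i_0 > 0$. Moreover, by Lemma \ref{structure}(ii) we have $|\nabla f| \geq c_2 > 0$ on $M_{\geq t_0}$, so the second fundamental form $h_t = \Ric/|\nabla f|$ of $M_t$ is uniformly bounded; through the Gauss equation this also yields a uniform intrinsic sectional curvature bound on $(M_t, g_t)$.

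Next comes the packing step. Fix $r > 0$ small with $r \leq i_0/2$, and choose a maximal extrinsically $r$-separated subset $\{x_1, \ldots, x_N\} \subset M_t$. The ambient balls $B_g(x_i, r/2)$ are pairwise disjoint, and because $|\nabla f| \leq c_1$ they are contained in the tube $T_r := \{y \in M^n : |f(y) - t| \leq c_1 r/2\}$. The co-area formula together with the volume bound $A(s) \leq A_\infty$ gives
$$\vol(T_r) \leq \frac{c_1 A_\infty}{c_2} \, r,$$
while Bishop--Gromov (using $\Ric \geq 0$) gives $\vol B_g(x_i, r/2) \geq \omega_n (r/2)^n$. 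Comparing, $N \leq C/r^{n-1}$. By maximality, the extrinsic balls $B_g(x_i, r)$ cover $M_t$. Since $|h_t|$ is uniformly bounded, intrinsic and extrinsic distances on $M_t$ are comparable on scales $\lesssim r$, so chaining overlapping balls along any pair of points yields
$$\diam(g_t) \leq C' \, N \, r = O\!\left(r^{-(n-2)}\right),$$
a finite bound depending only on $t_0$.

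The main obstacle will be this last step, the translation from extrinsic to intrinsic distances. What is really needed is a uniform lower bound on the \emph{intrinsic} injectivity radius of $(M_t, g_t)$. I expect this can be obtained either by a direct adaptation of Sharafutdinov's retraction argument used in Lemma \ref{inj} to the hypersurface itself, or by combining the uniform bound on $|h_t|$ with the ambient positive injectivity radius via a standard Riemannian submanifold comparison (bounded second fundamental form forces the exponential maps of $g_t$ and $g$ to stay close on scales controlled by $|h_t|_\infty^{-1}$ and $i_0$). An alternative route, avoiding the submanifold comparison altogether, is to use the upper estimate in the remark after Corollary \ref{diam}; however this requires proving $\int_{t_0}^\infty \sup_{M_s} R / \inf_{M_s} |\nabla f|^2 \, ds < \infty$, whose integrability of $\sup_{M_s} R$ is not immediate from $R \in L^1$ alone, so the packing argument above appears to be the more robust path.
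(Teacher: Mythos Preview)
Your approach is essentially the paper's own: the paper's proof is a three-sentence sketch of exactly this packing argument (lower bound on the volume of small ambient balls from Lemma~\ref{inj} plus bounded curvature; uniform upper bound on the tube $\vol M_{t-\alpha\leq s\leq t+\alpha}$ from Corollary~\ref{vol-borne} and $R\in L^1$; then pack). Your additional care about the passage from extrinsic to intrinsic distances is warranted---the paper simply says ``by a ball packing argument'' and leaves this step implicit---and your resolution via the uniform bound on $h_t=\Ric/|\nabla f|$ is the correct one.

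One correction: the inequality $\vol B_g(x_i,r/2)\geq \omega_n(r/2)^n$ does \emph{not} follow from Bishop--Gromov with $\Ric\geq 0$; Bishop--Gromov gives the opposite inequality. The lower volume bound you need comes instead from the positive injectivity radius $i_0$ together with the \emph{upper} bound on sectional curvature (G\"unther's inequality), which is precisely what the paper invokes and what you have already recorded. With that fix the argument goes through.
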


 \textbf{Proof of Corollary \ref{diam-borne}.}
 On the one hand, by Lemma \ref{inj} and boundedness assumption on curvature, the volume of small balls is uniformly bounded from below. On the other hand, by assumption (ii) and Corollary \ref{vol-borne}, the volume of any tubular neighbourhood of $M_t$ with fixed width is uniformly bounded from above, i.e., for $\alpha>0$, $\vol M_{t-\alpha\leq s \leq t+\alpha}$ is uniformly bounded from above in $t$.
 Therefore, by a ball packing argument,  one can uniformly bound the diameter of $M_t$.
 
  \hfill$\square$\\
We should now estimate the derivatives of the curvature of $g_t$.

\begin{lemma}\label{deriv}
Let $(M^n,g,\nabla f)$ be a complete steady gradient soliton satisfying 
\begin{itemize}
\item[(i)] $\Ric\geq 0$ and $R>0$,
\item[(ii)] $\lim_{+\infty}\arrowvert \Rm(g)\arrowvert=0.$
\end{itemize}
Then there exist constants $(C_k)_{k\geq0}$ depending on $t_0>0$ such that for all $t\geq t_0$, we have
$$\arrowvert \nabla^k\Rm(g_t)\arrowvert\leq C_k.$$
Moreover, $\lim_{t\rightarrow+\infty} \sup_{M_t}\arrowvert \Rm(g_t)\arrowvert=0$.
\end{lemma}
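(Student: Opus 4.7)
The plan is to relate the intrinsic geometry of the hypersurfaces $M_t$ to the ambient geometry of $(M^n,g)$ via the Gauss-Codazzi-Mainardi equations, using Shi's derivative estimates to control the ambient curvature and its covariant derivatives. Since $\lim_{+\infty}|\Rm(g)|=0$, the ambient curvature is bounded on all of $M^n$; the associated Ricci flow $g(s)=\psi_s^*g$ is an ancient solution with uniformly bounded curvature, so Shi's derivative estimates yield $|\nabla^k\Rm(g)|\leq \tilde C_k$ on $M^n$ for every $k\geq 0$.

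Next I would control the second fundamental form of $M_t$ along the unit outward normal $\mathbf{n}=\nabla f/|\nabla f|$, namely $h_t=\Hess(f)/|\nabla f|=\Ric/|\nabla f|$. By Lemma \ref{structure}, $|\nabla f|^2=R_{\max}-R$ and the zero set of $|\nabla f|$ coincides with the compact set $\{f=0\}$, hence $|\nabla f|\geq c(t_0)>0$ on $M_{\geq t_0}$ for every $t_0>0$. Combined with the previous step this gives $|h_t|\leq C(t_0)$ for $t\geq t_0$. Ambient derivatives of $|\nabla f|$ are controlled via $\nabla|\nabla f|^2=-\nabla R$ (Lemma \ref{id}(iii)) and iterated differentiation, so each $\nabla^j_{g_t}h_t$ is a polynomial expression in $\nabla^i\Ric$, $\nabla^i R$, and inverse powers of $|\nabla f|$, all uniformly bounded on $M_{\geq t_0}$.

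The bounds on $\Rm(g_t)$ now follow from the Gauss equation
$$\Rm(g_t)(X,Y,Z,W)=\Rm(g)(X,Y,Z,W)+h_t(X,Z)h_t(Y,W)-h_t(X,W)h_t(Y,Z),$$
which handles the $k=0$ case. For $k\geq 1$, iterating with the Weingarten relation $\nabla^g_X Y=\nabla^{g_t}_X Y-h_t(X,Y)\mathbf{n}$ on tangent fields, together with the Codazzi-Mainardi identity $(\nabla^{g_t}_X h_t)(Y,Z)-(\nabla^{g_t}_Y h_t)(X,Z)=-\Rm(g)(X,Y,Z,\mathbf{n})$, trades tangential derivatives of $h_t$ for ambient curvature quantities. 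Induction then expresses $\nabla^k_{g_t}\Rm(g_t)$ as a universal polynomial in $\nabla^j_g\Rm(g)|_{M_t}$, $\nabla^j_{g_t}h_t$, and their contractions with $\mathbf{n}$ for $j\leq k$, each uniformly bounded by the previous steps.

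For the decay statement, $|\Rm(g)|\to 0$ at infinity forces $|\Ric|\to 0$ at infinity, and combined with $|\nabla f|\geq c(t_0)$ on $M_{\geq t_0}$ this yields $\sup_{M_t}|h_t|\to 0$ as $t\to\infty$; the Gauss equation then gives $\sup_{M_t}|\Rm(g_t)|\to 0$. The main obstacle is the combinatorial bookkeeping in the inductive step: ensuring that repeated covariant differentiation along $M_t$ never produces uncontrolled normal derivatives of ambient tensors. This closes up because $\mathbf{n}$-directional derivatives can be converted, via the Weingarten map and the soliton identity $\Ric=\Hess f$, into tangential derivatives plus zero-order terms already controlled by the earlier steps.
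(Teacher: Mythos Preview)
Your proposal is correct and follows essentially the same strategy as the paper: use Shi's derivative estimates on the eternal Ricci flow $g(\tau)=\psi_\tau^*g$ to bound all $|\nabla^{g,k}\Rm(g)|$, use Lemma~\ref{structure} to bound $|\nabla f|$ from below on $M_{\geq t_0}$ (hence control $h_t=\Ric/|\nabla f|$ and its derivatives), and then transfer everything to $(M_t,g_t)$ via the Gauss equations and the difference formula $(\nabla^{g_t}-\nabla^g)A=A\ast h_t$. The paper packages the inductive bookkeeping slightly differently, setting $U_k:=(\nabla^{g_t,k}-\nabla^{g,k})A$ and deriving the recursion $U_{k+1}=\nabla^{g,k}A\ast h_t+U_k\ast h_t+\nabla^g U_k$, but this is the same content as your Weingarten/Codazzi induction; your worry about ``uncontrolled normal derivatives'' is in fact a non-issue, since Shi's estimates already bound the full ambient derivatives $\nabla^{g,k}\Rm(g)$ in every direction.
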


\textbf{Proof of Lemma \ref{deriv}}.\\
The fact that $\lim_{t\rightarrow+\infty} \sup_{M_t}\arrowvert \Rm(g_t)\arrowvert=0$ follows from the Gauss equations:
\begin{equation}
 K_{g_t}(X,Y)=K_g(X,Y)+\det h_t(X,Y), 
 \end{equation}
where $X$ and $Y$ are tangent to $M_t$.

In order to estimate the covariant derivatives $\nabla^{g_t,k} \Rm(g_t)$, it suffices to control those of $\Rm(g)$. 
Indeed, if $A$ is a $p$-tensor and $(X_i)_{0\leq i\leq p}$, $p+1$ tangent vectors to $M_t$, then
$$(\nabla^{g_t}-\nabla^{g})A(X_0,X_1,...,X_p)=\sum_{i=1}^{p}A(X_1,...,(\nabla^{g}_{X_0}X_i-\nabla^{g_t}_{X_0}X_i),...,X_p).$$

Now  $(\nabla^{g}_{X_0}X_i-\nabla^{g_t}_{X_0}X_i)=-h_t(X_0,X_i)\textbf{n}.$
Consequently, 
$$(\nabla^{g_t}-\nabla^g)A=A\ast h_t,$$
where, if $A$ and $B$ are two tensors, $A\ast B$ means any linear combination of contractions of the tensorial product of $A$ and $B$. 
Define $U_k:=(\nabla^{g_t,k}-\nabla^{g,k})A$ for $k\in\mathbb{N^*}$.
Then, 
\begin{eqnarray*}
U_{k+1}&=&(\nabla^{g_t,k+1}-\nabla^{g,k+1})A\\
&=&(\nabla^{g_t}-\nabla^{g})(\nabla^{g_t,k}A)+\nabla^g(\nabla^{g_t,k}-\nabla^{g,k})A\\
&=&(\nabla^{g_t,k}A)\ast h_t +\nabla^gU_k\\
&=&\nabla^{g,k}A\ast h_t+U_k\ast h_t+\nabla^{g}U_k.
 \end{eqnarray*}
 
 By induction on $k$, we show that $U_k$ is a linear combination of contractions of the tensorial products of $(\nabla^{g,i}A)_{0\leq i\leq k-1}$ and $(\nabla^{g,j}h_t)_{0\leq j\leq k-1}$.
 
 Now, bounding  $(\nabla^{g,j}h_t)_{j\geq 0}$ means bounding $(\nabla^{g,i}\Rm)_{i\geq 0}$ and bounding from below $\arrowvert\nabla f\arrowvert$.
If we take $A=\Rm(g)$, we see that bounding $(\nabla^{g_t,k} \Rm(g_t))_{k\geq 0}$ amounts to bounding $(\nabla^{g,k} Rm(g))_{k\geq 0}$ and bounding from below $\arrowvert\nabla f\arrowvert$.
 By Theorem 1.1 in [\ref{Shi}] due to W.X. Shi, there exists
 $T=T(n,\sup_{M^n}\arrowvert K\arrowvert)>0$ and constants $\tilde C_k=\tilde C_k(n,\sup_{M^n}\arrowvert K\arrowvert)$ such that for any time $\tau\in ]0, T]$ and for all $k\geq 0$, one has
$$\sup_{M^n}\arrowvert \nabla^{g(\tau),k}\Rm(g(\tau))\arrowvert^2\leq \frac{\tilde C_k}{\tau^k}.$$
In our situation, the Ricci flow acts by isometries: $g(\tau)=\psi_{\tau}^*g$, for all $\tau \in \mathbb{R}$, where $(\psi_{\tau})_{\tau}$ is the $1$-parameter group of diffeomorphisms of $M^n$ generated by $-\nabla f$.
Modulo a translation at a time slice $0<\tau\leq T$, we can assume 
$$\sup_{M^n}\arrowvert \nabla^{g,k}\Rm(g)\arrowvert^2\leq  C_k,$$
where $C_k=C_k(n,\sup_{M^n}\arrowvert K_g\arrowvert)$.
 This completes the proof.

\hfill$\square$\\

We are now in a position to apply the following theorem to the hypersurfaces $(M_t,g_t)_{t\geq t_{0}}$ assuming that $R\in L^1(M^n,g)$.

\begin{theo}[Cheeger-Gromov]
Let $n\geq 2,(\lambda_i)_{i\geq0}, v, D\in (0,+\infty)$. The class of compact $n$-Riemannian manifolds  $(N^n,g)$ satisfying
$$\arrowvert \nabla^i\Rm\arrowvert\leq \lambda_i,\quad \diam\leq D, \quad v\leq \vol,$$
is compact for the smooth topology.
\end{theo}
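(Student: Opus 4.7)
The plan is to derive this from Cheeger's injectivity radius estimate combined with local convergence in suitably chosen coordinate charts. First, from the upper bound $|\Rm|\leq \lambda_0$, the diameter bound $D$, and the volume bound $v$, Cheeger's lemma produces a uniform positive lower bound $i_0>0$ on the injectivity radius of every $(N^n,g)$ in the class. This gives a common scale at which each point $p\in N$ has a geodesic ball $B(p,i_0/2)$ diffeomorphic to a Euclidean ball via $\exp_p$, and so furnishes each manifold with a controlled atlas.

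Next, on each such ball we use the bounds $|\nabla^i \Rm|\leq \lambda_i$ to convert geometric control into coordinate control. The cleanest route is via Jost--Karcher harmonic coordinates of definite size $r_0=r_0(n,\lambda_0,i_0)$: in harmonic charts the metric components satisfy an elliptic system whose nonlinearity involves only $\Rm$ and its covariant derivatives, so elliptic regularity gives $C^{k+1,\alpha}$ bounds on $g_{ab}$ in terms of $(\lambda_i)_{0\leq i\leq k}$, uniformly across the class. (Alternatively one can work in normal coordinates and iterate the Jacobi equation.)

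Third, fix a maximal $i_0/4$-separated net $\{p_\alpha^{(j)}\}_\alpha$ in each $N_j$. By Bishop--Gromov (using the curvature and diameter bounds) the number of points in such a net is bounded above uniformly in $j$, and by the lower volume bound it is bounded below; passing to a subsequence we may assume the cardinality is a fixed integer $N$ and that the combinatorial pattern of overlaps of the harmonic charts centered at the $p_\alpha^{(j)}$ stabilizes. On each chart the metric components lie in a $C^{k+1,\alpha}$-bounded set, hence a $C^{k,\alpha}$-precompact one by Ascoli--Arzel\`a; a diagonal extraction over $k$ yields smooth subconvergence of the local metric tensors, together with smooth subconvergence of the transition functions (which are harmonic for both metrics and hence enjoy the same regularity). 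Reassembling the limiting charts and transitions produces a smooth limit manifold $(N_\infty, g_\infty)$ with diffeomorphisms $\phi_j\colon N_\infty\to N_j$ along a subsequence such that $\phi_j^* g_j\to g_\infty$ smoothly, which is the Cheeger--Gromov convergence asserted.

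The main obstacle is the injectivity radius lower bound: without it the charts collapse and no uniform local framework exists. This is Cheeger's theorem and is the substantive geometric input. Once $i_0>0$ is in hand, the remaining steps --- elliptic regularity in harmonic coordinates, volume comparison to bound the number of charts, and Ascoli--Arzel\`a with a diagonal argument --- are routine and technical rather than conceptual.
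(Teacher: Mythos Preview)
The paper does not prove this theorem at all: it is quoted as the classical Cheeger--Gromov compactness theorem and immediately applied to the family $(M_k,g_k)$, with no argument given. So there is nothing in the paper to compare your proof against.

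Your sketch is the standard route to this result and is essentially correct. A small imprecision: the transition maps between harmonic coordinate charts are not themselves harmonic in any useful sense; their regularity comes instead from the fact that they are solutions of an ODE/PDE system whose coefficients are the metric components (or, equivalently, from writing $\partial_a$ of one chart in terms of the other and using the $C^{k+1,\alpha}$ control on $g_{ab}$). Also, when you invoke Bishop--Gromov to bound the number of net points, note that you are using the Ricci lower bound implied by $|\Rm|\le\lambda_0$, not a diameter bound. With these cosmetic fixes your outline is the textbook proof.
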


Apply this theorem to the sequence  $(M_{k},g_k)_{k\geq t_0}$ with 
$$\lambda_i:=C_i, \quad v=A(t_0),\quad D:=D(t_0),$$
where the sequence $(C_p)_{p}$ comes from Lemma \ref{deriv} and $D(t_0)$ (resp. $A(t_0)$) is obtained by Corollary \ref{diam-borne} (resp. by Corollary \ref{vol-borne}).
There exists a subsequence  $(M_{k_i},g_{k_i})_{i}$ converging to a flat compact manifold  $(M_{\infty},g_{\infty},p_{\infty})$  by assumption on the sectional curvature of $M^n$. As  $M_t$ and $M_{\infty}$ are compact,  the manifolds $M_t$ and $M_{\infty}$ are diffeomorphic for $t>0$.

To sum it up, we have shown the

\begin{prop}\label{type-top}
Let $(M^n,g,\nabla f)$ be a complete steady gradient soliton satisfying 
\begin{itemize}
\item[(i)] $\Ric\geq 0$ and $R>0$,
\item[(ii)] $\lim_{+\infty}\arrowvert\Rm(g)\arrowvert=0,$
\item[(iii)] $R\in L^1(M^n,g)$.
\end{itemize}

Then the level sets of $f$, $M_t$ for $t>0$, are connected and are diffeomorphic to a compact flat $(n-1)$-manifold.
\end{prop}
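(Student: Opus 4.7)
The strategy is to apply the Cheeger--Gromov compactness theorem stated immediately above to the family $(M_t,g_t)_{t\geq t_0}$. First I would verify the three uniform bounds required: the derivative bounds $|\nabla^{i}\Rm(g_t)|\leq C_i$ come directly from Lemma~\ref{deriv}; the uniform diameter bound $\diam(g_t)\leq D(t_0)$ is the content of Corollary~\ref{diam-borne}; and the uniform volume lower bound $\vol(M_t,g_t)\geq A(t_0)$ is the left-hand inequality in Corollary~\ref{vol-borne}. These three inputs together yield a sequence $t_k\to+\infty$ and a compact Riemannian $(n-1)$-manifold $(M_\infty,g_\infty)$ such that $(M_{t_k},g_{t_k})$ converges to $(M_\infty,g_\infty)$ in the smooth Cheeger--Gromov topology.

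Next I would show the limit is flat. Lemma~\ref{deriv} also gives $\sup_{M_t}|\Rm(g_t)|\to 0$ as $t\to+\infty$, and pulling back along the diffeomorphisms implicit in smooth convergence forces $\Rm(g_\infty)\equiv 0$. Since smooth Cheeger--Gromov convergence of compact manifolds produces diffeomorphisms $M_{t_k}\simeq M_\infty$ for all sufficiently large $k$, and since the normalized gradient flow $\phi_{s-t}$ is a diffeomorphism $M_t\to M_s$ for all $0<t\leq s$, every level set $M_t$ with $t>0$ is diffeomorphic to the single compact flat $(n-1)$-manifold $M_\infty$.

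For connectedness, I would exploit the convexity of $f$ provided by $\Hess f=\Ric\geq 0$: the sublevel $\{f\leq t\}$ is a totally convex compact subset of $M^n$, hence connected, since any two of its points are joined by a minimizing $M^n$-geodesic which by total convexity stays in the sublevel. In particular the minimum set $\{f=0\}=\{\nabla f=0\}$ is itself a connected totally convex compact subset, and for small $t>0$ the level $M_t$ arises as the smooth boundary of a tubular neighbourhood of this connected critical set. Combined with the fact that all level sets $M_t$ for $t>0$ are mutually diffeomorphic via the flow $\phi$, this gives connectedness of $M_t$ for every $t>0$.

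The principal technical obstacle I anticipate is precisely this connectedness step: total convexity delivers connectedness of each sublevel, but passing from that to connectedness of the smooth boundary hypersurface $M_t$ requires a little care around the critical set, typically through a Morse--Bott description of $f$ near $\{\nabla f=0\}$ and a check that no superfluous end of $M^n$ develops. Once that is settled, the remainder of the argument is a direct consequence of the Cheeger--Gromov compactness theorem combined with the curvature decay supplied by Lemma~\ref{deriv}.
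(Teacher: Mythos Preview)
Your compactness argument---derivative bounds from Lemma~\ref{deriv}, diameter bound from Corollary~\ref{diam-borne}, volume lower bound from Corollary~\ref{vol-borne}, then Cheeger--Gromov and the curvature decay $\sup_{M_t}|\Rm(g_t)|\to 0$ to identify the limit as flat---is exactly what the paper does in the paragraph preceding the proposition.

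Where you diverge is on connectedness. You propose to pass from total convexity of the sublevels to connectedness of $M_t$ via a Morse--Bott description of $f$ near $M_0$, and you rightly flag this as the delicate step (indeed, a connected totally convex set can in principle have disconnected boundary, e.g.\ if $M_0$ happened to be a hypersurface the normal sphere bundle would have two components). The paper bypasses this local analysis entirely: if $M_t$ were disconnected for some (hence every) $t>0$, then $M^n$ would have at least two ends; since $\Ric\geq 0$, the Cheeger--Gromoll splitting theorem produces a line and an isometric splitting $M^n\cong \mathbb{R}\times N$ with $N$ compact. But then $(N,g_0)$ would be a compact steady gradient Ricci soliton, hence Ricci-flat, contradicting $R>0$. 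This one-line contradiction is shorter and uniform; your route is plausible but would require exactly the extra work you anticipated, so the splitting argument is the cleaner choice here.
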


\textbf{Proof}
The only thing we have to check is the connectedness of the hypersurfaces $M_t$ for $t>0$. 
If $M_t$ have more than one component then $M^n$ would be disconnected at infinity and therefore, by the Cheeger-Gromoll theorem, it would split isometrically as a product $(\mathbb{R}\times N,dt^2+g_0)$  where $N$ is compact. Then, $(N,g_0)$ would be a compact steady gradient soliton, necessarily trivial and so $(M^n,g)$. Contradiction.
This proves the connectedness of the hypersurfaces $M_t$.
\hfill$\square$

\section{The local and global splitting}\label{section2}
As seen in the introduction, a fundamental example of steady soliton discovered by Hamilton is the cigar soliton.
An example of nontrivial steady gradient Ricci solitons with nonnegative sectional curvature and scalar curvature in $L^1$ in higher dimensions is the following: consider the metric product $(\mathbb{R}^2, g_{cigar})\times (\mathbb{R}^{n-2},\eucl)$ and take a  quotient  by a Bieberbach group of rank $n-2$. 

Theorem \ref{global-iso} shows that this is the only example satisfying these assumptions.
We prove this result in section \ref{section2.1} below. A local splitting theorem under weaker curvature assumptions is proved in section \ref{section2.2}.

\subsection{The global splitting}\label{section2.1}

First, we need some background from the theory of nonnegatively curved Riemannian spaces.
The presentation below follows closely Petersen [\ref{Petersen}].
The main difficulty to have a global result comes from the set $M_0=\{f=\min_{M^n} f\}$. Such a set is  \textbf{totally convex}, i.e., any geodesic of $M$ connecting two points of $M_0$ is contained in $M_0$. More generally, any sublevel of a convex function $f$, $M_{\leq t}=\{p\in M^n/f(p)\leq t\}$ is totally convex.
In order to have a better understanding of this notion, we sum up briefly its general properties ([\ref{Petersen}], Chap.11).

\begin{prop}
Let $A\subset (M^n,g)$ be a totally convex subset of a Riemannian manifold.
Then $A$ has an interior which is a totally convex submanifold of $M^n$ and a boundary $\partial A$ non necessarily smooth which satisfies the hyperplane separation property. 
\end{prop}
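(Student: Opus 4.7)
The plan is to follow the classical Cheeger-Gromoll framework as exposed in [\ref{Petersen}], organized around the notion of tangent cone. For each $p \in A$, introduce the tangent cone
$$T_pA := \{v \in T_pM : \exp_p(tv) \in A \text{ for all sufficiently small } t \geq 0\}.$$
The first step is to check that total convexity implies $T_pA$ is a convex cone in $T_pM$: closure under positive scaling is immediate, and for $v,w \in T_pA$ one joins $\exp_p(tv)$ to $\exp_p(tw)$ by the minimal geodesic --- which lies in $A$ by total convexity --- and reads off from a Taylor expansion of the exponential map at $p$ that $v+w \in T_pA$ as well.

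Next I would set $k(p):=\dim \mathrm{span}(T_pA)$ and argue that this integer is constant on $A$. Total convexity gives, near $p$, a small embedded $k(p)$-dimensional disk $D_p \subset A$ tangent to $\mathrm{span}(T_pA)$; for any $q \in A$, the union of the minimal geodesics from $q$ to points of $D_p$ is contained in $A$ and sweeps out a set near $q$ containing a $k(p)$-dimensional piece, forcing $k(q) \geq k(p)$. Symmetry then yields $k(p)=k(q)$. Denote this common value $k$, the intrinsic dimension of $A$. Set $\mathrm{int}(A) := \{p \in A : T_pA \text{ has nonempty interior in } \mathrm{span}(T_pA)\}$ and $\partial A := A \setminus \mathrm{int}(A)$.

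To see that $\mathrm{int}(A)$ is a smooth $k$-dimensional totally convex submanifold, pick $p \in \mathrm{int}(A)$ and use the restriction of $\exp_p$ to a small neighborhood of $0$ in $\mathrm{span}(T_pA)$ as a chart. Its image is contained in $A$ by convexity of $T_pA$ combined with total convexity of $A$; conversely, every nearby $q \in A$ is connected to $p$ by a short minimal geodesic whose initial velocity lies in $T_pA \subset \mathrm{span}(T_pA)$, so this chart exhausts a neighborhood of $p$ in $A$. Total convexity of $\mathrm{int}(A)$ as a submanifold then follows because every ambient geodesic joining two interior points remains in $A$ and, by uniqueness of geodesics, locally agrees with the induced geodesic of the submanifold.

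Finally, for the hyperplane separation at $p \in \partial A$, by construction $T_pA$ is a proper convex subset of the $k$-dimensional vector space $\mathrm{span}(T_pA)$, so the classical supporting hyperplane theorem yields a $(k-1)$-dimensional hyperplane of $\mathrm{span}(T_pA)$ leaving $T_pA$ in one closed half-space; completing this to an $(n-1)$-plane in $T_pM$ and exponentiating produces the desired separating hypersurface through $p$. The delicate point throughout is the constancy of $k(p)$, which genuinely requires total convexity of $A$ in its strongest form; once this is in hand, the remaining steps reduce to finite-dimensional convex analysis applied pointwise to the cones $T_pA$.
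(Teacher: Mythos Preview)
The paper does not give its own proof of this proposition: it is stated purely as background, with a reference to Petersen [\ref{Petersen}], Chapter~11, and no argument is supplied. Your proposal reconstructs exactly the classical Cheeger--Gromoll argument that Petersen presents there --- tangent cones, constancy of the intrinsic dimension, the exponential chart on the interior, and supporting hyperplanes at boundary points --- so in substance your approach coincides with what the paper is invoking by citation. A couple of steps (the passage from ``$\exp_p(tv)\in A$ for small $t$ depending on $v$'' to a uniform neighborhood in $\mathrm{span}(T_pA)$, and the Taylor-expansion claim that $v+w\in T_pA$) are sketched rather than nailed down, but these are standard and are carried out in full in the reference the paper cites.
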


Moreover, if $A$ is closed, there exists a unique projection on $A$ defined on a neighbourhood of $A$. More precisely, we state Proposition 1.2 of Greene and Shiohama  [\ref{Greene-Sh}] :

\begin{prop}[Greene-Shiohama]
Let $A\subset (M^n,g)$ be a totally convex closed subset . Then there exists an open subset $U\in M^n$ such that 

\begin{itemize}
\item[(i)] $A\subset U$,
\item[(ii)] for any point $p\in U$, there exists a unique point $\pi (p)\in A$ verifying $d(p,A)=d(p,\pi(p))$,
\item[(iii)]the application $\pi:U\rightarrow A$ is continuous,
\item[(iv)] for any $p\in U$, there exists a unique geodesic connecting $p$ to $A$ and it is contained in $U$.
\end{itemize}

If $A$ is compact, we can choose $U$ as $\{p\in M^n/d(p,A)<\epsilon\}$, where $\epsilon$ depends on the compactness of $A$.
\end{prop}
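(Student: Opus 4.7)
The plan is to construct $U$ as a small tubular neighborhood of $A$ where local Riemannian convexity applies, then read off the four properties from total convexity of $A$. For each $q \in A$, fix a radius $0 < r_q < \min\{\inj(q), \mathrm{conv}(q)\}$ (injectivity and convexity radii), so that $B(q, r_q)$ is a totally normal convex ball in which $d(\cdot,\cdot)^2$ is smooth and the squared distance to a fixed point is strictly convex along any geodesic contained in the ball. Set $U := \bigcup_{q\in A} B(q, r_q/4)$; then $A \subset U$ is open.

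For existence of $\pi(p)$ with $p\in U$, pick $q_0\in A$ with $p \in B(q_0, r_{q_0}/4)$, so $d(p, A) < r_{q_0}/4$. A minimizing sequence $(q_n)\subset A$ eventually satisfies $d(q_0, q_n) \leq d(q_0,p)+d(p,q_n)< r_{q_0}/2$, so it lies in the compact set $\overline{B}(q_0, r_{q_0}/2) \cap A$ (closed since $A$ is closed). Extracting a limit yields a nearest point $\pi(p)\in A$. For uniqueness, suppose $q_1, q_2 \in A$ both realize $d(p, A)$; both lie in $\overline{B}(q_0, r_{q_0}/2)$, and the minimizing geodesic $\gamma:[0,1] \to M^n$ from $q_1$ to $q_2$ is contained in $B(q_0, r_{q_0})$ and, by total convexity, in $A$. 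Strict convexity of $t \mapsto d(p, \gamma(t))^2$ in the convex ball forces its maximum on $[0,1]$ to be attained only at the endpoints, so if $q_1 \neq q_2$ any interior point of $\gamma$ would lie in $A$ and be strictly closer to $p$ than $d(p,A)$, a contradiction. Hence $q_1=q_2$.

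Continuity (iii) then follows from uniqueness and local compactness in the standard way: if $p_n \to p$ in $U$, the sequence $(\pi(p_n))$ is eventually confined to a compact subset of $A$, and any subsequential limit $q'$ satisfies $d(p, q') = \lim_n d(p_n, \pi(p_n)) = d(p, A)$, so $q' = \pi(p)$. For (iv), any minimizing geodesic from $p$ to $A$ terminates at a point realizing $d(p,A)$, hence at $\pi(p)$. Since $p, \pi(p) \in B(q_0, r_{q_0}/2)$ and this ball is totally normal and convex, there is a unique geodesic segment between them, and it lies inside the ball, hence inside $U$ (after a mild enlargement of the defining radii if needed). For the compact case, $\inj$ and the convexity radius admit a uniform positive lower bound on a closed tubular neighborhood of $A$, so one may take a single $\epsilon > 0$ and set $U = \{d(\cdot, A) < \epsilon\}$.

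The main obstacle I expect is the bookkeeping around the choice of $U$: it must be small enough that the strict convexity of squared distance forces uniqueness in (ii), yet large enough to contain the full minimizing geodesic of (iv) and to permit the compactness argument in (iii). Working from the start with radii bounded by a quarter of the local injectivity/convexity radius, and exploiting the triangle inequality $d(q_0, \pi(p)) \leq d(q_0, p) + d(p, A) < r_{q_0}/2$, confines both endpoints and the connecting geodesic to a genuinely convex ball and resolves all four properties simultaneously.
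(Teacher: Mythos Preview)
The paper does not prove this proposition at all: it is quoted verbatim as Proposition~1.2 of Greene--Shiohama and used as background for the soul-theoretic part of the proof of Theorem~\ref{global-iso}. There is therefore no ``paper's own proof'' to compare against; your write-up is supplying an argument the paper deliberately outsources.

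That said, your sketch is the standard one and is essentially correct. The only genuine soft spot is the containment assertion in (iv). With $U=\bigcup_{q\in A}B(q,r_q/4)$ you know the minimizing segment from $p$ to $\pi(p)$ stays in the convex ball $B(q_0,r_{q_0})$, but that ball need not lie in $U$: nothing prevents $r_{\pi(p)}$ from being much smaller than $r_{q_0}$, so $B(\pi(p),r_{q_0}/4)$ is not obviously a subset of $U$. Your parenthetical ``after a mild enlargement of the defining radii if needed'' hides exactly the estimate that is missing. The clean repair is to use that the injectivity and convexity radii are $1$-Lipschitz functions of the base point, so one may take $q\mapsto r_q$ continuous; then $d(\pi(p),q_0)<r_{q_0}/2$ forces $r_{\pi(p)}>r_{q_0}/2$, and with a slightly smaller constant (say $U=\bigcup_q B(q,r_q/8)$) every point of the segment, being within $r_{q_0}/4<r_{\pi(p)}/2$ of $\pi(p)\in A$, lies in $U$. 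Alternatively, observe directly that each point $x$ on the segment has $\pi(x)=\pi(p)$ (a shorter competitor at $x$ would yield one at $p$), and reorganize $U$ accordingly. Either way the fix is routine; the strategy is sound.
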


Therefore, the geometric situation near a totally convex closed subset is the same as in the case of  $\mathbb{R}^n$.
In the case of Theorem \ref{global-iso}, we have a smooth convex exhaustion function $f$ and a special  totally convex compact set $\{f=\min_{M^n} f=0\}=M_0$. We would like to understand the topological links (at least) between the levels $M_t$ for $t>0$ and the boundary of the  $\epsilon$-neighbourhood $M_{0,\epsilon}:=\{p\in M^n/d(p,M_0)\leq\epsilon\}$  of $M_0$ for $\epsilon>0$. Cheeger-Gromoll [\ref{Cheeger}] and Greene-Wu [\ref{Greene-Wu}] give a nice answer:

\begin{prop}[Cheeger-Gromoll; Greene-Wu]\label{delta-vois}
Let $f$ be a smooth convex exhaustion function on a Riemannian manifold $(M^n,g)$ with sectional curvature bounded from above.
Then, with the previous notations, for $\epsilon>0$ small enough and $0<\delta<<\epsilon$,
the boundary of the  $\delta$-neighbourhood of $M_0$ and the level $M_{\epsilon}$ are homeomorphic, i.e.
$$\partial M_{0,\delta}\simeq M_{\epsilon}.$$
\end{prop}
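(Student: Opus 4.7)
The plan is to build an explicit homeomorphism by projecting radially along the unique minimizing geodesics from $M_0$, exploiting the regularity given by the previous Greene-Shiohama proposition. Applied to the totally convex compact set $M_0$, that result provides a neighborhood $U = M_{0,\epsilon_0}$ on which every point $p$ admits a unique projection $\pi(p) \in M_0$ joined to $p$ by a unique minimizing geodesic, with $\pi$ continuous. Since $f^{-1}(0) = M_0$ and the sublevels of $f$ are compact, I choose $\epsilon > 0$ small enough that $M_\epsilon \subset U$; by compactness of $M_\epsilon$ and $M_\epsilon \cap M_0 = \emptyset$ there is $\eta > 0$ with $d(\cdot, M_0) \geq \eta$ on $M_\epsilon$, and I fix $\delta \in (0, \eta)$.

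For $y \in M_\epsilon$, let $\gamma_y : [0, L_y] \to M^n$ denote the unit-speed minimizing geodesic from $\pi(y)$ to $y$, of length $L_y = d(y, M_0) \geq \eta > \delta$, and define $\Phi: M_\epsilon \to \partial M_{0,\delta}$ by $\Phi(y) := \gamma_y(\delta)$. The image lies in $\partial M_{0,\delta}$ because $\gamma_y|_{[0,\delta]}$ realizes the distance from $\Phi(y)$ to $M_0$: any shorter path, concatenated with $\gamma_y|_{[\delta, L_y]}$, would contradict the minimality of $\gamma_y$. Continuity of $\Phi$ is immediate from the continuity of $\pi$ and continuous dependence of the minimizing geodesic on its endpoints inside $U$.

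For injectivity, suppose $\Phi(y_1) = \Phi(y_2) = x$. By uniqueness of the minimizing geodesic from $x$ to $M_0$, one has $\pi(y_1) = \pi(y_2) = \pi(x)$, and both $\gamma_{y_i}$ coincide on $[0, \delta]$; as geodesics with common initial data they agree on their common domain. Writing $\gamma$ for this common extension, $f \circ \gamma$ is convex, vanishes at $s = 0$, and remains strictly positive for $s > 0$ (otherwise $\gamma(s) \in M_0$ for some $s > 0$, contradicting uniqueness of the projection from $\gamma(s)$). A standard convexity argument then forces $f \circ \gamma$ to be strictly increasing on $[0, \infty)$, so $f \circ \gamma = \epsilon$ admits at most one solution; hence $L_{y_1} = L_{y_2}$ and $y_1 = y_2$.

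For surjectivity, given $x \in \partial M_{0,\delta}$, extend the minimizing geodesic $\sigma_x$ from $\pi(x)$ through $x$. Strict monotonicity of $f \circ \sigma_x$ together with the exhaustion property of $f$ yields a unique $s_x > \delta$ with $f(\sigma_x(s_x)) = \epsilon$, and setting $y := \sigma_x(s_x)$ gives $\Phi(y) = x$ provided $\sigma_x|_{[0, s_x]}$ is still distance-realizing. Verifying this last point is the main obstacle: one must rule out both the appearance of a focal or conjugate point of $M_0$ along $\sigma_x$ before parameter $s_x$, and the existence of a shorter competing path from $y$ back to $M_0$. This is precisely where the upper sectional curvature bound enters, via a Rauch-type comparison guaranteeing that for $\epsilon$ sufficiently small (roughly $\epsilon < \pi/\sqrt{\sup K}$) all such geodesics $\sigma_x$ remain minimizing throughout the collar $M_{0,\epsilon_0}$. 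Once this is checked, $\Phi$ is a continuous bijection between compact Hausdorff spaces, hence a homeomorphism.
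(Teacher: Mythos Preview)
The paper does not give its own proof of this proposition: it is quoted as a result of Cheeger--Gromoll and Greene--Wu, with references, and used as a black box. So there is no ``paper's proof'' to compare against; what you have produced is an attempted reconstruction of the classical argument.

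Your strategy---projecting along the unique minimizing geodesics to $M_0$ furnished by the Greene--Shiohama proposition---is indeed the natural one, and your treatment of continuity and injectivity is correct. The convexity argument forcing $f\circ\gamma$ to be strictly increasing is fine.

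The genuine gap, which you yourself flag, is in surjectivity: you need the extended geodesic $\sigma_x$ to remain a minimizer to $M_0$ all the way out to $M_\epsilon$. Your appeal to a Rauch-type focal-point bound is the right instinct, but it is not quite routine here because $M_0$ is only a totally convex \emph{set}, not in general a smooth submanifold (it may have boundary and corners). The notions of normal exponential map and focal point therefore need to be formulated carefully, and this is exactly the technical work that Cheeger--Gromoll and Greene--Wu carry out. A second point you gloss over is the translation between ``$\epsilon$ small as a value of $f$'' and ``the distance collar is shorter than the focal radius''; this requires relating $f$-sublevels to metric neighbourhoods of $M_0$, which is possible but not automatic. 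In short: the architecture of your proof matches the classical one, but the surjectivity step is precisely where the cited references do nontrivial work, and your sketch does not yet substitute for it.
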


Finally, we recall the notion of  \textbf{soul}.
A soul $S\subset M$ of a Riemannian manifold $(M,g)$ is a closed totally convex submanifold.
 This notion has been famous by the Soul theorem [\ref{Cheeger}] by Cheeger and Gromoll.

\begin{theo}[Soul Theorem]
Let $(M^n,g)$ be a complete Riemannian manifold with nonnegative sectional curvature.
Then there exists a soul $S^k$ of $M^n$ such that $M^n$ is diffeomorphic to the normal bundle of $S^k$.
\end{theo}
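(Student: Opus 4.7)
The plan is to construct the soul as the deepest stratum of a nested family of totally convex compact sets, and then to identify $M^n$ with the total space of the normal bundle of this soul via a distance-nonincreasing retraction.

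First, I would produce an initial totally convex compact set. Fix $p\in M^n$, and for every ray $\gamma$ issuing from $p$ consider the Busemann function
$$b_\gamma(x)=\lim_{t\to\infty}\bigl(t-d(x,\gamma(t))\bigr).$$
Toponogov comparison under $K\geq 0$ shows each $b_\gamma$ is convex, so
$$C_0:=\bigcap_{\gamma}\{x\in M^n : b_\gamma(x)\leq 0\}$$
is closed and totally convex. Were $C_0$ noncompact, a diagonal argument on geodesics from $p$ to a diverging sequence in $C_0$, together with total convexity, would produce a ray $\sigma$ from $p$ contained in $C_0$; but then $b_\sigma(\sigma(t))=t>0$ for $t>0$, contradicting $\sigma\subset\{b_\sigma\leq 0\}$. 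Hence $C_0$ is compact.

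Second, starting from $C_0$, I iteratively form
$$C_0^a:=\{x\in C_0 : d(x,\partial C_0)\geq a\},\qquad 0\leq a\leq a_{\max},$$
and verify, using the hyperplane separation property together with Toponogov comparison, that each $C_0^a$ is again totally convex. The maximal set $C_0^{a_{\max}}$ has empty relative boundary inside $C_0$ and is therefore a totally convex submanifold of strictly smaller dimension. Reinterpreting it as a Riemannian manifold in its own right and restarting the procedure, the dimension drops at each stage; the process terminates at a compact totally geodesic embedded submanifold without boundary, which is the desired soul $S^k$.

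Third, to identify $M^n$ with the normal bundle of $S^k$, I would construct Sharafutdinov's retraction $\Pi\colon M^n\to S^k$. On a tubular neighbourhood of $S^k$, $\Pi$ is the foot-of-perpendicular projection provided by the Greene--Shiohama proposition stated above; globally, one patches the analogous projections associated to the chain $C_0\supset C_0^{a_1}\supset\dots\supset S^k$. The content of Sharafutdinov's theorem is that $\Pi$ is distance-nonincreasing, from which one deduces that the fibres of $\Pi$ are open cells and that the normal exponential map of $S^k$ is a diffeomorphism onto $M^n$.

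The main obstacle lies in the two heredity claims: showing that the $C_0^a$ remain totally convex, which is a delicate application of Toponogov's theorem combined with the convexity of the distance to a convex subset, and constructing the retraction $\Pi$ with the Lipschitz property. The latter, due to Sharafutdinov, is the technical heart of the argument; once it is in hand, the identification with the normal bundle is soft differential topology.
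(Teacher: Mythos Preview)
The paper does not prove the Soul Theorem; it merely states it as a classical result of Cheeger and Gromoll with a citation to [\ref{Cheeger}], and then uses it as a black box in the proof of Theorem~\ref{global-iso}. There is therefore no ``paper's own proof'' against which to compare your proposal.

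That said, your outline is essentially the standard Cheeger--Gromoll construction (superlevels of Busemann functions, iterated contraction via distance to the boundary) supplemented by Sharafutdinov's retraction, which is exactly what the cited reference [\ref{Cheeger}] does, modulo the retraction being a later addition. One small caution: in your final step you assert that ``the normal exponential map of $S^k$ is a diffeomorphism onto $M^n$''. This is not quite what the Soul Theorem gives, and it is generally false as stated (the normal exponential need not be injective globally); the diffeomorphism with the normal bundle is obtained instead by flowing along the gradient-like vector field of the convex exhaustion, or equivalently via the isotopy of the totally convex sets $C_t$. The Sharafutdinov map furnishes a deformation retraction onto $S^k$, which combined with the local product structure near $S^k$ yields the bundle identification, but not through $\exp^\perp$ directly.
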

 
We now are in position to prove Theorem \ref{global-iso} .\\

\textbf{Proof of Theorem \ref{global-iso}}.
First of all, as the Ricci flow acts by isometries in this case, the sectional curvature is nonnegative for any time in $\mathbb{R}$. 
Consider for $p\in M^n$ and for time $\tau\in\mathbb{R}$,
$$\eta(p,\tau):=\{v\in T_pM / \Ric_{g(\tau)}(v)=0\}.$$

We recall that the evolution equation of the Ricci curvature under the Ricci flow $g(\tau)$ satisfies:
$\partial_{\tau}\Ric=\Delta_L\Ric ,$
where $\Delta_L$ means the Lichnerowicz laplacian for the metric $g(\tau)$ acting on symmetric $2$-tensors $T$ by
$\Delta_L T_{ij}:=\Delta T_{ij}+2R_{iklj}T_{kl} -R_{ik}T_{jk}-R_{jk}T_{ik}$.

 Thus, we can use Lemma 8.2 of Hamilton [\ref{Hamilton1}] to claim that $\eta(p,\tau)$ is a smooth distribution invariant by parallel translation and time-independent. Here, time-independence is clear because the flow acts by isometries.
 For any $p\in M^n$, we have an orthogonal decomposition invariant by parallel transport,
 $$T_pM=\eta(p,0)\oplus \{v\in T_pM / \Ric_g(v,v)>0\}=:\eta(p)\oplus\eta^{\perp}(p).$$
 As these distributions are parallel, by the weak de Rham's Theorem [\ref{Petersen}, Chap.8], there exists a neighbourhood $U_p$ for any point $p\in M^n$ such that 
 $$(U_p,g)=(U_1,g_1)\times(U_2,g_2),$$
 where $TU_1=\eta\mid U_1$ and $TU_2=\eta^{\perp}\mid U_2$.\\
 
 \begin{claim}
  $\dim \eta(p)=n-2$ for every $p\in M^n$.\\ 
  \end{claim}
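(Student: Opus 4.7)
Since $\eta$ is a smooth parallel distribution on the connected manifold $M^n$, its pointwise dimension $d := \dim \eta(p)$ is independent of $p$. The plan is to show $d = n-2$ by combining an upper bound ruling out $d = n-1, n$ with a lower bound extracted from the geometry of the level sets $M_t$.

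\emph{Upper bound $d \le n-2$.} If $d = n$ then $\Ric \equiv 0$, and combined with $K \ge 0$ the identity $\Ric(v,v) = \sum_i K(v,e_i)$ forces $K \equiv 0$; so $M$ is flat, contradicting the standing nonflat hypothesis. If $d = n-1$, the local de Rham splitting just obtained gives $U_p = U_1 \times U_2$ with $\dim U_1 = n-1$, $\Ric|_{U_1} \equiv 0$ (hence flat by the same argument) and $\dim U_2 = 1$ (trivially flat); so $M$ is locally, hence globally, flat---again a contradiction.

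\emph{Lower bound $d \ge n-2$.} Fix $t>0$ and a point $q \in M_t$. By Proposition \ref{type-top}, $M_t$ is diffeomorphic to a compact Bieberbach $(n-1)$-manifold, so its universal cover $\widetilde{M_t}$ is diffeomorphic to $\mathbb{R}^{n-1}$. Since $h_t = \Ric/|\nabla f|$ is positive semidefinite on $TM_t$, the Gauss equation gives
\[ \Ric_{g_t}(X,X) = \bigl(\Ric(X,X) - K_g(X,\mathbf{n})\bigr) + \bigl(h_t(X,X)H_t - |h_t(X,\cdot)|^2\bigr), \]
with both summands $\ge 0$---the first equals $\sum_{e_j \perp X, \mathbf{n}} K_g(X,e_j)$, and the second is a Cauchy--Schwarz consequence of $h_t \ge 0$. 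Applying the Cheeger--Gromoll splitting theorem to the compact manifold $(M_t,g_t)$ with $\Ric_{g_t} \ge 0$ and contractible universal cover, the de Rham decomposition of $\widetilde{M_t}$ can contain no nontrivial simply connected compact factor, so $\widetilde{M_t}$ splits isometrically as $\mathbb{R}^{n-1}$ and $(M_t,g_t)$ is metrically flat.

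Feeding this intrinsic flatness back into the Gauss equation, $0 = K_{g_t}(X,Y) = K_g(X,Y) + h_t(X,X)h_t(Y,Y) - h_t(X,Y)^2$ on every $2$-plane in $T_q M_t$, with both right-hand terms nonnegative. Hence $\det h_t \equiv 0$ on $T_q M_t$, so $h_t|_{T_q M_t}$---and thus $\Ric|_{T_q M_t} = |\nabla f|\,h_t$---has rank $\le 1$; since $\Ric \ge 0$, the kernel of $\Ric|_{T_q M_t}$ has dimension $\ge n-2$ and is contained in $\eta(q)$, yielding $\dim \eta(q) \ge n-2$. The main obstacle is the Cheeger--Gromoll step: one must carefully combine the $\Ric_{g_t} \ge 0$ bound with the Bieberbach diffeomorphism type of $M_t$ from Proposition \ref{type-top} to force metric flatness, after which the rank bound is a direct Gauss--equation computation.
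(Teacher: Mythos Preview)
Your proof is correct and follows essentially the same strategy as the paper: use Proposition~\ref{type-top} to identify the Bieberbach diffeomorphism type of $M_t$, invoke a rigidity theorem to conclude $(M_t,g_t)$ is metrically flat, then read off the rank bound on $\Ric$ from the Gauss equation. The only notable variation is that you obtain flatness from $\Ric_{g_t}\ge 0$ via the Cheeger--Gromoll splitting of the contractible universal cover, whereas the paper uses the stronger (and more immediate) bound $K_{g_t}\ge 0$ from~\eqref{eq:1} together with Bochner's theorem on the finite torus cover; your separate treatment of the upper bound $d\le n-2$ via the local de Rham splitting is also a bit more explicit than the paper's.
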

\begin{proof}[Proof of claim 1]

We remind that the second fundamental form $h_t$ of $M_t$ satisfies 
$$h_t=\frac{\Hess (f)}{\arrowvert \nabla f\arrowvert}.$$
Thus, the second fundamental form is nonnegative, i.e., $M_t$ is convex. 
Moreover, the Gauss equation tells us 
\begin{equation}
 K_{g_t}(X,Y)=K_g(X,Y)+\det h_t(X,Y), \label{eq:1}
 \end{equation}
where $X$ and $Y$ are tangent to $M_t$.
Consequently, if we take an orthonormal basis $(E_i)_i$ of $TM_t$, orthogonal to $\textbf{n}$, we have
\begin{equation}
\Ric(g_t)(X,X)=\Ric(g)(X,X)-K_g(X,\textbf{n})+\sum_i \det h_t(X,E_i), \label{eq:2}
\end{equation}
where $X$ is tangent to $M_t$.
Tracing the previous identity, we get
\begin{equation}
R(g_t)=R(g)-2\Ric(g)(\textbf{n},\textbf{n}) +(H_t)^2-\arrowvert h_t\arrowvert^2. \label{eq:3}
\end{equation}

By \eqref{eq:1}, we conclude that we have a family of metrics $g_t$ for $t>0$ on $M_t$ of nonnegative sectional curvature. Now, by Lemma \ref{inj}, we know that $(M^n,g)$ has positive injectivity radius. Therefore, as the scalar curvature is a Lipschitz function since $\nabla R=-2 \Ric(\nabla f)$ is bounded on $M^n$, one has $\lim_{+\infty} \arrowvert \Rm(g)\arrowvert=0$. Consequently, Proposition \ref{type-top} can be applied and $M_t$ is diffeomorphic to a compact flat $(n-1)$-manifold. Therefore, by Bieberbach's theorem ([\ref{Buser}] for a geometric proof), there exists a finite covering $\tilde M_{t}$ of $M_t$ which is topologically a torus $\mathbb{T}^{n-1}$.
To sum it up, we have a family of metrics $\tilde g_t$, for $t>0$, of nonnegative sectional curvature on a $(n-1)$-torus. So, we conclude that the metrics $\tilde g_t$ are flat (and so are the $g_t$) by the equality case in the Bochner theorem [\ref{Petersen}, Chap. 7].
Consequently, the previous identities implies
\begin{eqnarray}
 R&=&2\Ric(\textbf{n},\textbf{n})(>0), \label{eq:4} \\
\Ric(g)(X,X)&=&K_g(X,\textbf{n})\quad \mbox{for any spherical $X$}, \label{eq:5} \\
 \det h_t(X,E_i)&=&0, \quad\mbox{for any i and any spherical $X$}, \label{eq:6} \\
K_g(X,Y)&=&0\quad \mbox{for any spherical plane $(X,Y)$}.
\end{eqnarray}
By \eqref{eq:4},  $\textbf{n}$ is in $\eta^{\perp}$. \eqref{eq:6} means that the rank of $\Ric$ restricted to the hypersurfaces $M_t$ for $t>0$ is at most $1$. Finally, the meancurvature $H_t=R-\Ric(\textbf{n},\textbf{n})=\Ric(\textbf{n},\textbf{n})$ is positive, unless it would contradict \eqref{eq:4}. Thus, the rank of $\Ric$ restricted to the hypersurfaces $M_t$ for $t>0$ is exactly $1$.\\
 \end{proof}

Now, the universal covering $\tilde{M}^n$ of $M^n$ is isometric to $(M_1^2,g_1)\times (M_2^{n-2},g_2)$ where
$TM_1=\{\Ric_g>0\}$ and $TM_2=\{\Ric_g\equiv 0\}$.
Because of the nonnegativity of sectional curvature, $(M_2,g_2)=(\mathbb{R}^{n-2},\eucl)$. Thus $(M_1^2,g_1(\tau))_{\tau\in\mathbb{R}}$ is a complete $2$-dimensional steady gradient soliton with positive scalar curvature. By [\ref{Chow}, corollary B.12], $(M_1^2,g_1(\tau))_{\tau\in\mathbb{R}}$ is necessarily the cigar soliton. 

The last thing we need is the nature of the fundamental group of $M^n$.
\begin{claim}
If $S$ is a soul of $M^n$ then it has codimension $2$ and it is flat. 
\end{claim}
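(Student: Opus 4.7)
The plan is to identify the minimum set $M_0:=f^{-1}(0)=\{\nabla f=0\}$ as a soul of $M^n$, verify directly that it has codimension $2$ and is flat by lifting to the product splitting of the universal cover, and then invoke Sharafutdinov's uniqueness of souls to extend the conclusion to an arbitrary soul.

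First I would check that $M_0$ is a soul. The linear lower bound on $f$ at infinity (Lemma \ref{structure}(iii)) makes $M_0$ compact; total convexity follows from convexity of $f$, since any geodesic $\gamma$ joining two points of $M_0$ gives $f\circ\gamma$ convex, nonnegative, and vanishing at the endpoints, hence identically zero. Along $M_0$ one has $\Hess f = \Ric$, and by the rank information established in the proof of Claim $1$ the Ricci tensor has constant rank $2$, so $M_0$ is a smooth compact Morse--Bott critical submanifold of codimension exactly $2$ without boundary. A totally convex, compact, boundaryless submanifold is by definition a soul: the Cheeger--Gromoll inductive construction starting from $M_0$ terminates immediately.

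To prove flatness of $M_0$, I would lift to the universal cover $\tilde M^n=(\mathbb{R}^2,g_{cigar})\times(\mathbb{R}^{n-2},\eucl)$ obtained above. The lifted soliton potential $\tilde f$ satisfies $\Hess\tilde f=\widetilde{\Ric}$; since Ricci vanishes on the Euclidean factor, $\tilde f$ is affine along each slice $\{x\}\times\mathbb{R}^{n-2}$, and thus decomposes as $\tilde f(x,y)=f_{cigar}(x)+\langle a,y\rangle+b$. Nonemptiness of $\tilde M_0=\{\nabla\tilde f=0\}$ forces $a=0$, so $\tilde M_0=\{\text{tip}\}\times\mathbb{R}^{n-2}$ with the induced Euclidean metric. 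Since the de Rham splitting is canonical, $\pi_1(M^n)$ acts by isometries respecting the product structure, and compactness of $M_0$ implies that the induced action on $\mathbb{R}^{n-2}$ is cocompact. Hence $M_0$ is a flat compact quotient of $(\mathbb{R}^{n-2},\eucl)$.

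For an arbitrary soul $S\subset M^n$, Sharafutdinov's theorem (as strengthened by Perelman) yields a Riemannian submersion $M^n\to S$ whose restriction to any other soul is an isometry; hence $S$ is isometric to $M_0$, and in particular has codimension $2$ and is flat. The delicate point, in my view, is the identification of $M_0$ as a smooth submanifold of the correct dimension, which rests on the constant-rank information for $\Hess f$ obtained in Claim $1$; once that is in place, flatness and rigidity are essentially formal consequences of the product splitting and of the standard uniqueness result for souls.
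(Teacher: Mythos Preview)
Your argument is correct, but it follows a genuinely different route from the paper's. The paper never singles out $M_0$ as a soul. Instead, for an \emph{arbitrary} soul $S$ it observes that $f|_S$ is constant (convex function on a compact totally convex set), hence $TS\subset\ker\Hess f=\ker\Ric=\eta$; this immediately gives $\dim S\le n-2$ and flatness, since $S$ is totally geodesic and tangent to the flat parallel distribution. To rule out codimension $\ge 3$, the paper runs the Greene--Wu topological comparison: Proposition~\ref{delta-vois} identifies $\pi_1(M_t)$ with $\pi_1(\partial M_{0,\delta})$, the Cheeger--Gromoll disc-bundle picture identifies $\partial M_{0,\delta}$ with the sphere bundle $\nu_\delta(S)$, and if the fibre sphere has dimension $\ge 2$ the long exact sequence gives $\pi_1(M_t)\cong\pi_1(S)$, contradicting $\mathrm{rk}_{\mathbb Z}\pi_1(M_t)=n-1>\dim S$.

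Your approach trades this self-contained homotopy argument for an appeal to the uniqueness of souls (Sharafutdinov/Yim, sharpened by Perelman). That is a legitimate and arguably cleaner way to get codimension exactly $2$, since once you exhibit one soul of codimension $2$ the uniqueness theorem does the rest; but it imports a substantially deeper external result than anything the paper uses. Conversely, the paper's argument for flatness and $\mathrm{codim}\,S\ge 2$ is a one-line consequence of $TS\subset\eta$ and is more direct than your lift-to-the-cover computation. One small remark: your ``Morse--Bott'' justification that $M_0$ is a smooth codimension-$2$ submanifold is really the local de~Rham splitting coming from the parallel distribution $\eta$ (Hamilton's lemma), not Morse--Bott per se; the conclusion is correct, but the cleanest way to see it is exactly the product-neighbourhood picture you use afterwards on the universal cover.
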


\begin{proof}[Proof of claim 2.]

Indeed, as $S$ is compact and $f$ is convex, $f_{\mid S}$ is constant so $TS\subset \{\Ric_g\equiv 0\}$. Thus, $S$ has codimension at least  $2$ and $S$ is flat since $S$ is totally geodesic in a flat space. Moreover, by Bieberbach's theorem, the rank on $\mathbb{Z}$ of $\pi_1(S)$ is $\dim S$.

 Assume that $S$ has codimension greater than $2$. We obtain a contradiction by linking the fundamental groups $\pi_1(M_t)$ of the hypersurfaces $M_t$ and $\pi_1(S)$ as in Greene-Wu [\ref{Greene-Wu}]. One can assume that $S\subset M_0$ by construction of a soul.
 On the one hand, Lemma \ref{delta-vois} tells us that for $\delta$ small enough, 
 $$\pi_1(M_t)=\pi_1(\partial M_{0,\delta}).$$
 On the other hand, $ M_{0,\delta}$ and the $\delta$-disc bundle $\nu_{\leq\delta}(S):=\{(p,v)\in S\times (T_pS)^{\perp}/\arrowvert v\arrowvert \leq \delta\}$ are homeomorphic by [\ref{Cheeger}].
 Thus, $\pi_1(M_t)=\pi_1(\nu_{\delta}(S))$ where $\nu_{\delta}(S):=\{(p,v)\in S\times (T_pS)^{\perp}/\arrowvert v\arrowvert = \delta\}$.
 Now, the fibre of the fibration $\nu_{\delta}(S)\rightarrow S$ is a $k$-sphere with $k\geq 2$ hence simply-connected since the codimension of $S$ is at least $3$. The homotopy sequence of the fibration shows that $\pi_1(\nu_{\delta}(S))=\pi_1(S)$. To sum it up we have for $t>0$,
 $$\pi_1(S)=\pi_1(M_t).$$
Now the hypersurfaces $(M_t,g_t)$ are flat. In particular, this implies that the rank on $\mathbb{Z}$ of $\pi_1(M_t)$ is $n-1>\dim S=rk_{\mathbb{Z}}(\pi_1(S))$. Contradiction.
\end{proof}

Finally, by [\ref{Cheeger}], we know that the inclusion $S^{n-2}\rightarrow M^n$ is a homotopy equivalence, in particular $\pi_1(M^n)=\pi_1(S)$. So, $\pi_1(M^n)$ is a Bieberbach group of rank $n-2$.

\hfill$\square$\\

\subsection{The local splitting}\label{section2.2}
Without the nonnegativity of sectional curvature, we loose the global splitting.
Nonetheless, under a weaker assumption on the sign of curvature, we still get a local splitting, away from the minimal set $M_0$ of $f$. More precisely,

\begin{theo}\label{local-iso}

Let $(M^n,g,\nabla f)$ be a complete steady gradient soliton such that
\begin{itemize}
\item[(i)] $\Ric\geq 0$ and $R>0$,
\item[(ii)]$\arrowvert\nabla f\arrowvert^2R\geq 2 \Ric(\nabla f,\nabla f)$, 
\item[(iii)]$\lim_{+\infty}\arrowvert \Rm(g)\arrowvert=0,$
\item[(iv)]$R\in L^1(M^n,g)$.
\end{itemize}

Then, $M^n\setminus M_{0}$ is locally isometric to $(\mathbb{R}^2,g_{cigar})\times (\mathbb{R}^{n-2}, \eucl) $.
\end{theo}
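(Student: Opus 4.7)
The plan is to follow the proof of Theorem \ref{global-iso} step by step, replacing the Bochner equality case (which crucially exploited $K\geq 0$) by a scalar curvature rigidity theorem for the torus. First, assumptions (i), (iii), (iv) are exactly the hypotheses of Proposition \ref{type-top} (note that $\lim_{+\infty}|\Rm(g)|=0$ forces $\lim_{+\infty} R=0$, so Lemma \ref{structure} is available), and each level $(M_t,g_t)$ for $t>0$ is connected, compact, and diffeomorphic to a flat $(n-1)$-manifold.

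Next, I would exploit the Gauss identity \eqref{eq:3}
$$R(g_t)=\bigl[R(g)-2\Ric(\textbf{n},\textbf{n})\bigr]+\bigl[H_t^{2}-|h_t|^{2}\bigr].$$
Since $\textbf{n}=\nabla f/|\nabla f|$, hypothesis (ii) reads $R(g)\geq 2\Ric(\textbf{n},\textbf{n})$, so the first bracket is nonnegative; the second is nonnegative as well because $h_t=\Ric/|\nabla f|$ is positive semidefinite, and thus $(\trace h_t)^2\geq |h_t|^2$. Hence $R(g_t)\geq 0$ on $M_t$. By Bieberbach's theorem, $M_t$ admits a finite cover by the torus $\mathbb{T}^{n-1}$ on which the pulled-back metric still has nonnegative scalar curvature, and the Gromov--Lawson rigidity theorem for the torus (combined with Bourguignon's infinitesimal rigidity argument via Kazdan--Warner) then forces this metric to be flat; hence so is $g_t$. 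Replacing Bochner's easy equality case by this considerably deeper scalar-curvature rigidity is the step I expect to be the main obstacle.

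Flatness of $g_t$ makes both nonnegative terms in the decomposition above vanish identically: $R=2\Ric(\textbf{n},\textbf{n})$ and $H_t^{2}=|h_t|^{2}$. The latter, for $h_t\geq 0$, means $h_t$ has rank at most one; since $H_t=R/(2|\nabla f|)>0$, the rank is exactly one. Consequently the restriction of $\Ric(g)$ to each $TM_t$ has rank one, and combined with $\textbf{n}\in\eta^{\perp}$ the kernel distribution $\eta:=\{v\in TM^n\,/\,\Ric_g(v)=0\}$ has constant rank $n-2$ on $M^n\setminus M_0$.

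To conclude, I follow Claim 1 of Theorem \ref{global-iso} verbatim: Hamilton's Lemma 8.2 in [\ref{Hamilton1}], applied to the evolution $\partial_\tau\Ric=\Delta_L\Ric$ under the soliton-induced Ricci flow, shows that $\eta$ is a smooth, parallel distribution. The local de Rham theorem then produces, around every $p\in M^n\setminus M_0$, a product neighborhood $U=U_1\times U_2$ with $TU_1=\eta^{\perp}$ (two-dimensional, $\Ric>0$) and $TU_2=\eta$ (flat of dimension $n-2$). The mixed block of $\Hess f=\Ric$ vanishes with respect to this orthogonal splitting, so $f$ separates as $f(x_1,x_2)=f_1(x_1)+f_2(x_2)$ with $f_2$ affine and $(U_1,g_1,\nabla f_1)$ a two-dimensional steady gradient Ricci soliton of positive curvature; by the classification of two-dimensional steady gradient Ricci solitons cited in [\ref{Chow}], $(U_1,g_1)$ is locally isometric to the cigar, which yields the asserted local isometry.
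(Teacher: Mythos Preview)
Your route is the paper's: Proposition~\ref{type-top}, identity \eqref{eq:3} combined with hypothesis (ii) and the Gromov--Lawson torus rigidity to force each $g_t$ flat, then a rank analysis and a local de~Rham splitting. Two points, however, need more care.

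The substantive one is the appeal to Hamilton's Lemma~8.2. You say you follow Claim~1 of Theorem~\ref{global-iso} ``verbatim'', but there the invocation of Hamilton's lemma (which \emph{precedes} Claim~1) rests on the global hypothesis $K\geq 0$; here you only assume $\Ric\geq 0$. The reaction term in $\partial_\tau\Ric=\Delta_L\Ric$, evaluated at a null vector $v$ of $\Ric$, equals $2\sum_k\mu_k K_g(v,e_k)$ with $\mu_k\geq 0$ the Ricci eigenvalues, and this has no sign under $\Ric\geq 0$ alone; so parallelism of $\eta$ is not yet justified. The paper closes this gap by first extracting from $K_{g_t}\equiv 0$ and the rank-one property of $h_t$ the full set of identities \eqref{1'}--\eqref{4'}: in particular $K_g(X,Y)=0$ for spherical planes and $K_g(X,\textbf{n})=\Ric_g(X,X)\geq 0$, hence $K_g\geq 0$ throughout $M^n\setminus M_0$. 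Only then is Hamilton's lemma invoked, on a local space--time patch $(U_p,g(t))_{t\in[-T_p,T_p]}$ contained in $M^n\setminus M_0$. You have the ingredients for this step but do not carry it out.

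The second point is minor: the classification of two-dimensional steady gradient solitons you cite from [\ref{Chow}] is for \emph{complete} solitons, whereas your $(U_1,g_1,\nabla f_1)$ is only a local piece. The paper proceeds differently: it shows (Claim~3, using the local splitting already obtained) that $\textbf{n}$ is an eigenvector of $\Ric$, whence $R$ is radial, writes $g_2=dt^2+\phi^2\,d\theta^2$, and solves the rotationally symmetric soliton ODE to get $\phi(t)=\tfrac{1}{a}\tanh(at)$. Your $f$-separation is a clean alternative, but you still owe a \emph{local} identification; this follows because the concircular equation $\Hess f_1=\tfrac{R_1}{2}g_1$ forces a local warped-product form away from critical points and reduces to the same ODE.
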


\textit{\textbf{Remark.}} Assumption (ii) seems to be ad hoc.
Nonetheless, note that (ii) is verified if the sum of the spherical sectional curvatures is nonnegative, i.e., if for any $t>0$,
$$\sum_{i=1}^{n-1}K_g(X,E_i)\geq 0,$$

where $X$ is tangent to $M_t$ and $(E_i)_{1\leq i\leq n-1}$ is an orthonormal basis of $TM_t$. Note that $R=2\Ric(\textbf{n},\textbf{n})+\sum_{1\leq i,j\leq n-1}K_g(E_i,E_j),$ where $\textbf{n}$ is the unit outward normal to $M_t$.
This condition is clearly implied if $(M^n,g)$ has nonnegative sectional curvature.
Finally, the inequality (ii) is an equality for surfaces. Therefore, Theorem \ref{local-iso} can be seen as a comparison theorem with the geometry of the cigar soliton. \\

\textbf{Proof of Theorem \ref{local-iso}.}

On the one hand, by assumptions (i), (iii) and (iv), we know (Proposition \ref{type-top}) that the hypersurfaces $M_t$ have a finite covering diffeomorphic to a $(n-1)$-torus.
On the other hand, (\ref{eq:3}) in the previous proof,
$$R(g_t)=R(g)-2\Ric(g)(\textbf{n},\textbf{n}) +(H_t)^2-\arrowvert h_t\arrowvert^2,$$
associated with assumption (ii) shows that the hypersurfaces $(M_t,g_t)$ have nonnegative scalar curvature. 
Therefore, we have obtained a sequence of $(n-1)$-torus $(\tilde M_t, \tilde g_t)$ with nonnegative scalar curvature. The Gromov-Lawson theorem (which is relevant in the case $n-1\geq 3$)  [\ref{Lawson}] asserts that the metrics $\tilde g_t$ are flat. Thus, so are the $g_t$ for $t>0$.
Now, along the same lines of the proof of Theorem \ref{global-iso}, we get the following identities  
\begin{eqnarray}
R=2\Ric(\textbf{n},\textbf{n})(>0), \label{1'}\\
\Ric(g)(X,X)=K_g(X,\textbf{n})\quad \mbox{ for any spherical $X$},\label{2'}\\
\det h_t(X,E_i)=0,\quad\mbox{ for any $i$ and any spherical $X$}, \label{3'}\\
K_g(X,Y)=0, \quad\mbox{for any spherical plane $(X,Y)$}.\label{4'}
\end{eqnarray}

In particular, identities (\ref{2'}) and (\ref{4'}) show that the sectional curvature $K_g$ is nonnegative outside the minimal set $M_0$. Here, the flow does not act isometrically on $M^n\setminus M_0$. Nonetheless, for any $p\in M^n\setminus M_0$, there exists a neighbourhood $(U_p, g(t))_{t\in[-T_p,T_p]}$ with $T_p>0$ contained in $(M^n\setminus M_0,g)$ so that the sectional curvature restricted to $(U_p, g(t))_{t\in[-T_p,T_p]}$ remains nonnegative. Thus, as the argument is local, we can use Lemma 8.2 in Hamilton [\ref{Hamilton1}] to claim that $\eta\arrowvert U_p$ is a smooth distribution invariant by parallel translation. According to the weak version of de Rham's theorem, for any $p\in M^n\setminus M_0$, there exists a neighbourhood $U_p$ such that
 $$(U_p,g)=(U_1,g_1)\times(U_2,g_2),$$
 where $TU_1=\eta\mid U_1$ and $TU_2=\eta^{\perp}\mid U_2$.
 We can show, with the same arguments as before, that $\dim \eta(p)=n-2$ for any $p\in M^n\setminus M_0$.\\
 
 \begin{claim}
$\textbf{n}:=\nabla f/\arrowvert \nabla f\arrowvert$ is an eigenfunction for $\Ric$.
\end{claim}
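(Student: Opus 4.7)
The plan is to exploit the local product decomposition $(U_p,g)=(U_1,g_1)\times (U_2,g_2)$ with $\dim U_1=2$ and $(U_2,g_2)$ flat, which was just obtained from Hamilton's Lemma 8.2 and the weak de Rham theorem. Because $TU_2=\eta|_{U_p}$ is the kernel of $\Ric$ and $TU_1=\eta^{\perp}|_{U_p}$, the Ricci tensor splits along $T_pM=T_pU_1\oplus T_pU_2$ as $\Ric=\Ric_{g_1}\oplus 0$. Since $(U_1,g_1)$ is two-dimensional, $\Ric_{g_1}=(R_{g_1}/2)\,g_1$, while the flatness of $g_2$ gives $R=R_{g_1}$. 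Hence $\Ric$ acts on each tangent space as $(R/2)\pi_{TU_1}$, where $\pi_{TU_1}$ denotes orthogonal projection onto the first factor.

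With this description in hand, proving the claim reduces to showing that $\textbf{n}\in TU_1$ pointwise; once this is established, $\Ric(\textbf{n})=(R/2)\pi_{TU_1}(\textbf{n})=(R/2)\textbf{n}$, so $\textbf{n}$ is an eigenvector of $\Ric$ with eigenvalue $R/2$. I would prove this by decomposing $\textbf{n}=\textbf{n}_1+\textbf{n}_2$ with $\textbf{n}_i\in TU_i$ and invoking identity $(1')$, $R=2\Ric(\textbf{n},\textbf{n})$:
$$\frac{R}{2}=\Ric(\textbf{n},\textbf{n})=\Ric_{g_1}(\textbf{n}_1,\textbf{n}_1)=\frac{R}{2}\arrowvert\textbf{n}_1\arrowvert^2.$$
Since $R>0$ on $M^n\setminus M_0$, this forces $\arrowvert\textbf{n}_1\arrowvert=1=\arrowvert\textbf{n}\arrowvert$, and therefore $\textbf{n}_2=0$ and $\textbf{n}\in TU_1$.

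No serious technical obstacle is expected, since all the structural input has already been put in place. The most delicate point is really the identification of the de Rham factors $TU_1,TU_2$ with $\eta^{\perp},\eta$, but this is built into the construction of the splitting in the paragraph preceding the claim and rests on $\eta$ being a parallel distribution of locally constant rank, exactly the output of Hamilton's Lemma 8.2. Beyond that, the argument reduces to two-dimensional pointwise linear algebra on the orthogonal decomposition $T_pM=T_pU_1\oplus T_pU_2$, together with the single scalar identity $R=2\Ric(\textbf{n},\textbf{n})$.
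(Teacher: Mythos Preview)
Your proof is correct and follows essentially the same route as the paper: both use the local product splitting, the fact that $\Ric$ on the two\nobreakdash-dimensional factor equals $(R/2)g$, and the identity $R=2\Ric(\textbf{n},\textbf{n})$ to conclude that $\textbf{n}$ is a Ricci eigenvector with eigenvalue $R/2$. Two minor remarks: your labels $U_1,U_2$ are swapped relative to the paper (there $TU_1=\eta$ and $TU_2=\eta^{\perp}$), and your explicit derivation of $\textbf{n}\in\eta^{\perp}$ via $|\textbf{n}_1|^2=1$ is in fact more careful than the paper, which simply takes this as already established from the line ``By \eqref{eq:4}, $\textbf{n}$ is in $\eta^{\perp}$'' in Claim~1 and then chooses a basis accordingly.
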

\begin{proof}[Proof of claim 3]
 Let $p\in M_t$ for $t>0$ and $(e_i)_{i=1...n-1}$ an orthonormal basis of $TM_t$ at $p$.
We assume that $\eta(p)$ is generated by $(e_i)_{i=2...n-1}$ and that $\eta^{\perp}(p)$ is generated by $\textbf{n}$ and $e_1$.
By the previous local splitting, 
$$\Ric_g(\textbf{n},e_i)=\Ric_{g_2}(\textbf{n},0)+\Ric_{g_1}(0,e_i)=0,$$
for $i=2...n-1$ and
$$\Ric_g(\textbf{n},e_1)=\Ric_{g_2}(\textbf{n},e_1)=\frac{R_{g_2}(p)}{2}g_2(\textbf{n},e_1)=0,$$
since $\dim \eta^{\perp}(p)=2$!
Consequently, $\Ric$ stabilizes $\textbf{n}$.
\end{proof}

Now, $\Ric$ restricted to $TM_t$ is given by
$$\Ric(X)=\Rm(X,\textbf{n})\textbf{n},$$
for any spherical $X$. As $\Ric$ is a symmetric endomorphism of $TM^n$, it stabilizes $TM_t$ too. Moreover, as $\nabla R+ 2\Ric(\nabla f)=0$, we have for any spherical $X$,
$$g(\nabla R,X)=0.$$
Thus, $R$ and $\arrowvert \nabla f\arrowvert ^2$ are radial functions.

Let $p\in M^n\setminus M_0$ and $U_p$ a neighbourhood such that $(U_p,g)=(U_1^{n-2},g_1)\times(U_2^2,g_2).$ Locally, $g_2$ is
$$g_2=dt^2+\phi^2(t,\theta)d\theta^2,$$ where $\phi$ is a smooth positive function on $U_2$.
We claim that $\phi$ is radial, i.e.  does not depend on $\theta$.
We know that $g=g_1+g_2=dt^2+g_t$ on $U_p$ and $g_t$ is flat, i.e. $\phi^2(t,\theta)d\theta^2+g_1$ is flat. In particular, the coefficients of such a metric are coordinates independent since all the Christoffel symbols vanish. This proves the claim. \hfill$\square$

To sum it up, for any $p \in M^n\setminus M_0$, there exists a neighbourhood $U_p$ such that 
$$(U_p,g)=(U_1^{n-2},g_1)\times(U_2^2,dt^2+\phi^2(t)d\theta^2),$$
where $(U_1,g_1)$ is flat and $R_{g_1}=R_g=-\phi''/\phi>0$.\\
Consequently, $(U_2^2, dt^2+\phi^2(t)d\theta^2)$ is a $2$-dimensional rotationally symmetric steady gradient soliton with positive curvature. An easy calculation ([\ref{Chow}], App. B) shows that 
$\phi(t)=\frac{1}{a}\tanh(at),$ for $a>0$ i.e., $g_2$ is a cigar metric.

\hfill$\square$\\

\subsection{Steady breathers with nonnegative curvature operator and scalar curvature in $L^1$}
We end this section with a rigidity result on steady breathers. Recall that a solution $(M^n,g(t))$  to the Ricci flow is called a steady breather if there exists $T>0$ and a diffeomorphism $\phi$ of $M^n$ such that $g(T)=\phi^*g(0)$. It is quite clear that a steady breather can be extended in an eternal solution. Perelman [\ref{Perelman}] showed that compact steady breathers are compact steady gradient Ricci solitons, hence Ricci-flat. In the noncompact case, the question is still open. In this direction, Hamilton [\ref{Hamilton2}] proved  a more general result on (noncompact) eternal solutions with nonnegative curvature operator.

\begin{theo} [Hamilton]\label{eternal}
If $(M^n,g(t))_{t\in \mathbb{R}}$ is a simply connected complete eternal solution to the Ricci flow with nonnegative curvature operator, positive Ricci curvature and such that  $\sup_{M^n\times \mathbb{R}} R$ is attained at some space and time, then $(M^n,g(t))$ is a steady gradient Ricci soliton.
\end{theo}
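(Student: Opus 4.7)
The plan is to invoke Hamilton's matrix Harnack inequality for the Ricci flow with nonnegative curvature operator and then exploit its equality case. Hamilton's trace Harnack on a complete NNCO solution starting at $t=0$ reads
$$H(V) := \frac{\partial R}{\partial t}+\frac{R}{t}+2\langle\nabla R,V\rangle+2\Ric(V,V)\geq 0$$
for every vector field $V$. Since our solution is eternal, I would shift the time origin to $-T$ and send $T\to+\infty$; the singular term $R/(t+T)$ disappears and the Harnack simplifies to
$$H_\infty(V):=\frac{\partial R}{\partial t}+2\langle\nabla R,V\rangle+2\Ric(V,V)\geq 0,\qquad \forall V,\;\forall t\in\mathbb{R}.$$

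Using positivity of $\Ric$, I minimize $H_\infty$ over $V$ pointwise; the optimum is at $V^\ast=-\tfrac{1}{2}\Ric^{-1}(\nabla R)$ and yields the scalar inequality
$$Q:=\frac{\partial R}{\partial t}-\tfrac{1}{2}\langle\nabla R,\Ric^{-1}(\nabla R)\rangle\geq 0.$$
In particular $\partial_t R\geq 0$ everywhere. Let $(p_0,t_0)$ be the space-time point at which the hypothesis places $\sup R$. Then $\nabla R(p_0,t_0)=0$ (spatial maximum) and, combining $\partial_t R\geq 0$ with $t_0$ being a temporal maximum, $\partial_t R(p_0,t_0)=0$. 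Consequently $Q(p_0,t_0)=0$, so the nonnegative Harnack quantity attains its minimum value at an interior point.

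The rigidity step is next: Hamilton derives an evolution system for the symmetric $2$-tensor refinement of the Harnack quantity, to which his strong maximum principle for tensors applies. Since this tensor has zero minimum at $(p_0,t_0)$, the strong minimum principle forces it to vanish identically on $M^n\times\mathbb{R}$, i.e.\ equality in the matrix Harnack holds everywhere. The equality case is exactly what Hamilton analyzes in \cite[\ref{Hamilton2}]{}: it forces $\mathcal{L}_{V^\ast}g=2\Ric$ and the closedness of the dual 1-form $(V^\ast)^{\flat}$. Since $M^n$ is simply connected, closed $\Rightarrow$ exact, producing a smooth potential $f$ with $\nabla f=V^\ast$; the identity $\mathcal{L}_{\nabla f}g=2\Hess(f)$ then reads $\Hess(f)=\Ric$, which is the steady gradient soliton equation.

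The main obstacle is the rigidity argument in the last paragraph: one must pass from the scalar inequality $Q\geq 0$ to Hamilton's matrix Harnack tensor, verify it satisfies a parabolic system compatible with his maximum principle, and carefully extract from equality both $\mathcal{L}_{V^\ast}g=2\Ric$ and the closedness of $(V^\ast)^{\flat}$. The scalar quantity alone is not sufficient: one only obtains $\partial_t R=\tfrac{1}{2}\langle\nabla R,\Ric^{-1}(\nabla R)\rangle$, which is a necessary but far weaker condition than the full soliton identity. Simple-connectedness is used only at the last step to integrate the vector field to a potential function.
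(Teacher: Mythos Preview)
The paper does not prove this theorem at all: it is quoted as Hamilton's result from [\ref{Hamilton2}] and used as a black box in the proof of Corollary \ref{Breather}. There is therefore no ``paper's own proof'' to compare against.

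That said, your sketch is a faithful outline of Hamilton's original argument in [\ref{Hamilton2}]. The passage from the time-shifted trace Harnack to $Q\geq 0$ via the optimal $V^\ast=-\tfrac12\Ric^{-1}(\nabla R)$ is correct (positive Ricci curvature makes $\Ric$ invertible), and your identification of the vanishing $Q(p_0,t_0)=0$ at the space-time maximum is right. You also correctly flag the real issue: the scalar quantity $Q$ alone does not satisfy a clean parabolic inequality to which a strong maximum principle applies, so one must work with Hamilton's full matrix Harnack tensor, verify its evolution equation, and run the tensor strong maximum principle there. The extraction of the soliton structure from identical vanishing of the matrix Harnack---the closedness of $(V^\ast)^\flat$ and the identity $\nabla_i V_j^\ast+\nabla_j V_i^\ast=2R_{ij}$---is precisely the content of Hamilton's equality analysis, and simple-connectedness is indeed only used at the end to produce the potential $f$. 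As a proof \emph{proposal} this is accurate, with the honest caveat you already state: the rigidity step is where all the work lies and is not carried out here.
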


Combining this result with Theorem \ref{global-iso}, we get the following corollary.

\begin{coro}\label{Breather}
Let $(M^n, g(t))_{t\in[0,T]}$ be a complete steady breather with nonnegative curvature operator bounded on $M^n\times [0,T]$ and $R_{g(0)}\in L^1(M^n,g(0))$.
Then the universal covering of $M^n$ is isometric to
$$(\mathbb{R}^2,g_{cigar})\times (\mathbb{R}^{n-2}, \eucl), $$
and $\pi_1(M^n)$ is a Bieberbach group of rank $n-2$.
\end{coro}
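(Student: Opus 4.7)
The plan is to upgrade the breather to a steady gradient Ricci soliton, so that Theorem \ref{global-iso} applies. First I would extend the flow to an eternal solution $(M^n, g(t))_{t \in \mathbb{R}}$ by iterating $\phi$; since all time slices are then isometric (via some power of $\phi$) to slices in $[0, T]$, the bounded nonnegative curvature operator persists uniformly in time. The evolution $\partial_t R = \Delta R + 2|\Ric|^2 \geq 0$ combined with the breather relation $R(\cdot, T) = R(\phi(\cdot), 0)$ and the maximum principle (legitimized by bounded curvature) shows that $\sup_M R(\cdot, t)$ is constant in $t$. Moreover, Shi's derivative estimates applied on a slightly earlier time slice yield a uniform Lipschitz bound on $R(\cdot, 0)$; combined with $R(\cdot, 0) \in L^1(M,g(0))$, this forces $R(\cdot, 0) \to 0$ at infinity, so $\sup_{M \times \mathbb{R}} R$ is attained at some space-time point $(p_0, 0)$.

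With this in hand, I would appeal to Hamilton's Theorem \ref{eternal}. If $\Ric > 0$ on $M$, the theorem applies directly and produces a gradient soliton potential. Otherwise, Hamilton's strong maximum principle for the Ricci tensor ([\ref{Hamilton1}], Lemma 8.2) shows that $\eta := \ker \Ric$ is a smooth parallel distribution, independent of time; the weak de Rham theorem then splits the universal cover isometrically as $\tilde M = N^{n-k} \times \mathbb{R}^k$, with $N$ simply connected and strictly Ricci-positive. Applying Hamilton's theorem to $N$ (which inherits the bounded nonnegative curvature operator and attained scalar curvature maximum) yields a potential $f_N$ on $N$, which extends trivially to $\tilde f(x, y) := f_N(x)$ on $\tilde M$. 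The isometric action of $\pi_1(M)$ on $\tilde M$ preserves the Ricci decomposition, hence the splitting; its $N$-component preserves $f_N$ up to an additive constant, and this constant must vanish after normalizing $f_N$ by its minimum value (equivalently, since $\Ric_N > 0$ forbids nonzero parallel vector fields, the ambiguity in $f_N$ is purely constant, and the isometric action is forced to fix it). Thus $\tilde f$ descends to a potential $f$ on $M$, making $(M, g(0), \nabla f)$ a steady gradient Ricci soliton.

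Finally, $(M, g, \nabla f)$ satisfies the hypotheses of Theorem \ref{global-iso}: nonnegative sectional curvature follows from nonnegative $\Rm$, $R \in L^1(M, g)$ is given, and the soliton is nonflat because $R$ attains a strictly positive supremum at $p_0$. The stated description of $\tilde M^n$ and $\pi_1(M^n)$ then follows at once. The main obstacle is the possible degeneracy of $\Ric$: the strong maximum principle reduces this to the strictly Ricci-positive case through the splitting above, after which the descent of the potential from $\tilde M$ back to $M$ rests on the fact that an $\Isom(N)$-action on a positive-Ricci manifold can shift $f_N$ only by a constant, which is killed by normalization.
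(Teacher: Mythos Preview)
Your argument is essentially the paper's: extend to an eternal solution, show $\sup R$ is attained, pass to the universal cover and split off the Euclidean factor via Hamilton's strong maximum principle, apply Theorem~\ref{eternal} to the strictly Ricci-positive factor $N$, then descend the potential to $M$ using that any isometry of $N$ can shift $f_N$ only by a constant (no parallel vector fields when $\Ric_N>0$) which is forced to vanish by the uniqueness of the minimum point. The one detail you skipped is that deducing $R(\cdot,0)\to 0$ at infinity from $R\in L^1$ together with a Lipschitz bound requires a uniform lower bound on the volume of small balls; the paper supplies this via the positive injectivity radius of a complete manifold with bounded nonnegative sectional curvature [\ref{Sha}], and note also that even in your case ``$\Ric>0$ on $M$'' Theorem~\ref{eternal} still needs simply-connectedness, so the passage through $\tilde M$ and the descent argument are required there too.
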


\textbf{Proof of Corollary \ref{Breather}.}

Recall that ancient solutions with bounded nonnegative curvature operator have nondecreasing scalar curvature [\ref{Chow}, Chap.10], i.e. $R(x,t_1)\leq R(x,t_2)$ for $t_1\leq t_2$ and $x\in M^n$.
Therefore $t\rightarrow \sup_{M^n} R_{g(t)}$ is nondecreasing and is constant on a steady breather.
Now, as $R_{g(0)}\in L^1(M^n,g(0))$, $R_{g(0)}$ is Lipschitz and $K\geq 0$, $\lim_{+\infty} R_{g(0)}=0$ because a Riemannian manifold with bounded nonnegative sectional curvature has positive injectivity radius [\ref{Sha}]. Hence, $\sup_{M^n\times \mathbb{R}} R$ is attained. Consider the universal Riemannian covering $(\tilde{M}^n, \tilde{g}(t))$ of $(M^n, g(t))$. By the Hamilton's maximum principle [\ref{Hamilton1}], $(\tilde{M}^n, \tilde{g}(t))=(N^k,h(t))\times(\mathbb{R}^{n-k}, eucl),$ where $(N^k,h(t))$ is a simply connected complete eternal solution with nonnegative curvature operator, positive Ricci curvature and such that  $\sup_{N^k\times \mathbb{R}} R_{h(t)}$ is attained. By Hamilton's theorem \ref{eternal}, $(N^k,h(t))$ is a steady gradient soliton $(N^k,h,\nabla f)$, and so is $(\tilde{M}^n, \tilde{g})$ with the same potential function $f$. The only thing to check according to Theorem \ref{global-iso} is that $(M^n,g(0))=(M^n,g)$ is a steady gradient soliton, i.e. the potential function $f$ is well-defined on $M^n$.
By [\ref{Cheeger}, section 6], the fundamental group $\pi_1(M^n)$ is a subgroup of $\Isom(N^k,h)\times\Isom(\mathbb{R}^{n-k})$. Let $\psi\in\Isom(N^k,h)$. We want to prove that $\psi^*f=f$. As $\psi$ is an isometry for the metric $h$, $\Hess_h(f-\psi^*f)=0$. Therefore, $\arrowvert\nabla( f-\psi^*f) \arrowvert=Cst=0$ because $N^k$ contains no lines, i.e., $f-\psi^*f=Cst$. Moreover, as $\Ric_h>0$, i.e. $f$ is strictly convex, the scalar curvature attains its maximum at a unique point $p\in N^k$. Thus, $\psi(p)=p$. This proves that $f=\psi^*f$.

\hfill$\square$\\

\section{Scalar curvature decay and volume growth on a steady gradient soliton}\label{section3}

In this section, we try to understand the relations between the scalar curvature decay and the volume growth on a steady gradient soliton.
We recall a result due to Munteanu and Sesum [\ref{Munteanu}].
\begin{lemma}\label{sesum}
Let $(M^n,g,\nabla f)$ be a complete steady gradient soliton.
Then, for any $p\in M^n$, there exists a constant $c_p>0$   such that
$$\vol B(p,r)\geq c_p r,$$
for any $r\geq 1$.
\end{lemma}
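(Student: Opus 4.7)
\noindent The plan is to combine the soliton identities of Lemma \ref{id}, namely $\Delta f = R$ and $\arrowvert \nabla f\arrowvert^2 + R = C$ (constant), with the divergence theorem on geodesic balls centered at $p$. By Chen's theorem, any complete gradient Ricci soliton satisfies $R \geq 0$, hence $C \geq 0$ and $\arrowvert \nabla f\arrowvert \leq \sqrt{C}$ on $M^n$. For a.e.\ $r > 0$, $\partial B(p,r)$ is a rectifiable hypersurface, and the divergence theorem gives
$$\int_{B(p,r)} R \, d\vol = \int_{B(p,r)} \Delta f \, d\vol = \int_{\partial B(p,r)} g(\nabla f, \nu)\, dA \leq \sqrt{C}\cdot A(r),$$
where $A(r) := \mathcal{H}^{n-1}(\partial B(p,r))$ and $\nu$ is the outward unit normal.

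Next I would invoke the dichotomy recalled at the start of Section \ref{section1}: either $R > 0$ everywhere on $M^n$, or $\Ric \equiv 0$ (the trivial Ricci-flat case). This follows from the strong maximum principle applied to the elliptic equation $\Delta R + g(\nabla f, \nabla R) + 2\arrowvert\Ric\arrowvert^2 = 0$ which $R$ satisfies on a steady soliton. In the nontrivial case $R > 0$, set $\alpha := \int_{B(p,1)} R \, d\vol > 0$. For every $r \geq 1$ the estimate above yields $A(r) \geq \alpha/\sqrt{C}$ for a.e.\ $r \geq 1$ (using $\int_{B(p,r)} R \geq \int_{B(p,1)}R = \alpha$ since $R\geq 0$), and the coarea formula gives
$$\vol B(p,r) = \vol B(p,1) + \int_1^r A(s)\, ds \geq \vol B(p,1) + \frac{\alpha}{\sqrt{C}}(r-1),$$
which is $\geq c_p\, r$ for $r \geq 1$ upon choosing $c_p := \tfrac{1}{2}\min\{\vol B(p,1),\,\alpha/\sqrt{C}\}$.

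In the remaining trivial case $\Ric \equiv 0$, $(M^n,g)$ is a complete Ricci-flat manifold (hence $\Ric\geq 0$), so Yau's classical linear lower bound on volume growth for complete noncompact manifolds with nonnegative Ricci curvature gives the conclusion directly. The main anticipated obstacle is the rigorous application of the divergence theorem on non-smooth geodesic spheres, which is handled in the standard way by approximating $r_p$ by smooth exhausting functions and using that the set of critical values has measure zero; the conceptual key is the dichotomy $R > 0$ versus $\Ric \equiv 0$ combined with the a priori pointwise bound $\arrowvert \nabla f \arrowvert \leq \sqrt{C}$, both flowing directly from the identities of Lemma \ref{id} together with Chen's nonnegativity of $R$ and the strong maximum principle.
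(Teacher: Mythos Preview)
The paper does not supply its own proof of this lemma; it is simply quoted from Munteanu--Sesum [\ref{Munteanu}]. Your argument is correct and is in fact essentially the one given there: Chen's nonnegativity $R\geq 0$ on a complete steady soliton, the identity $\Delta f=R$ integrated over geodesic balls via the divergence theorem, the pointwise bound $\arrowvert\nabla f\arrowvert\leq\sqrt{C}$ from Lemma~\ref{id}(iii), and then the coarea formula to turn the uniform lower bound on $A(r)$ into linear volume growth. Your elliptic route to the dichotomy $R>0$ versus $\Ric\equiv 0$ (strong maximum principle applied to $\Delta R+g(\nabla f,\nabla R)=-2\arrowvert\Ric\arrowvert^2\leq 0$) is clean and, incidentally, avoids the bounded-curvature hypothesis carried by the parabolic version quoted at the start of Section~\ref{section1}.

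One small caveat: in the Ricci-flat branch you invoke Yau's linear lower bound, which requires $M^n$ noncompact. The lemma as literally stated is false on a compact Ricci-flat steady soliton (e.g.\ a flat torus), so noncompactness must be assumed---as Munteanu--Sesum do, and as is implicit throughout this paper. In the $R>0$ branch noncompactness is automatic since $\int_{M}R=\int_M\Delta f=0$ would force $R\equiv 0$ on a closed manifold, so your case split already forces the issue only in the trivial case.
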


What happens if we assume a minimal volume growth on a steady gradient soliton ?
The answer can be given in term of scalar curvature decay:
\begin{lemma}\label{crois-boule-min}
Let $(M^n,g,\nabla f)$ be a complete steady gradient soliton.
 Assume that there exists $C_p>0$ such that
$$\vol B(p,r)\leq C_p r,$$
 for a fixed $p\in M^n$ and $r\geq 1$.\\

Then $R$ belongs to $L^1(M^n,g)$.
\end{lemma}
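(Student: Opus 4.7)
The plan is to exploit the identity $\Delta f = R$ from Lemma \ref{id}(i), which converts $\int R$ into a boundary term via the divergence theorem, and then to control that boundary term using the linear volume upper bound combined with the fact that $|\nabla f|$ is uniformly bounded (via identity (iii)).

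First I would reduce to the case $R>0$. Since a steady soliton is an ancient solution, the ancient-solution lemma recalled in the paper implies that either $R>0$ everywhere or $\mathrm{Ric}\equiv 0$; in the latter case $R\equiv 0$ and the conclusion is trivial. Assume then $R>0$. Identity (iii) gives $|\nabla f|^2 = C_0 - R$ for some constant $C_0$, and since $R\geq 0$ we obtain $|\nabla f|\leq \sqrt{C_0}=:\Lambda<\infty$.

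Now I would introduce a cutoff argument to avoid dealing with non-smoothness of geodesic spheres. Fix $r\geq 1$ and choose a Lipschitz cutoff $\eta_r$ with $\eta_r\equiv 1$ on $B(p,r)$, $\mathrm{supp}\,\eta_r\subset B(p,2r)$, and $|\nabla\eta_r|\leq 2/r$. Integration by parts combined with $\Delta f = R$ yields
\[
\int_M \eta_r\, R\, d\mu \;=\; \int_M \eta_r\, \Delta f\, d\mu \;=\; -\int_M \langle \nabla \eta_r,\nabla f\rangle\, d\mu.
\]
Estimating the right-hand side by Cauchy--Schwarz and the volume hypothesis,
\[
\int_M \eta_r\, R\, d\mu \;\leq\; \frac{2\Lambda}{r}\,\vol B(p,2r) \;\leq\; \frac{2\Lambda}{r}\cdot 2C_p r \;=\; 4\Lambda C_p,
\]
a bound independent of $r$. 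Since $R\geq 0$ and $\eta_r\nearrow 1$ as $r\to\infty$, monotone convergence gives $\int_M R\, d\mu \leq 4\Lambda C_p$, i.e.\ $R\in L^1(M^n,g)$.

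There is no serious obstacle here: the argument is almost a one-liner once one recognizes that $\Delta f=R$ turns $\int R$ into a flux of $\nabla f$. The only conceptual point worth flagging is why sign control on $R$ matters --- without $R\geq 0$, the cutoff estimate would only bound $|\int R|$, not $\int |R|$, so the reduction via the ancient-solution lemma is genuinely used to upgrade the cancellation-prone integral into an $L^1$ norm.
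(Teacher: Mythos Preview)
Your proof is correct and follows essentially the same route as the paper: both exploit $\Delta f = R$ together with integration by parts and the bound $|\nabla f|\le\sqrt{C_0}$ to dominate $\int_{B(p,r)} R$ by a boundary term controlled via the linear volume hypothesis. The only cosmetic difference is that the paper applies Stokes directly on geodesic balls to get $\int_{B(p,r)} R \le C\, A(p,r)$ and then extracts a sequence $r_k\to\infty$ with $A(p,r_k)$ bounded (from $\int_0^r A(p,s)\,ds=\vol B(p,r)\le C_p r$), whereas your cutoff bypasses the sequence step and bounds everything by $\vol B(p,2r)/r$ in one shot.
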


\textbf{Proof of Lemma \ref{crois-boule-min}.}
Let $p\in M^n$ and $r\geq 1$.
Then, by the Stokes theorem applied to $f$,
$$\int_{B(p,r)}Rd\mu=\int_{B(p,r)}\Delta fd\mu\leq\int_{\partial B(p,r)}\arrowvert \nabla f\arrowvert dA
 \leq CA(p,r),$$
 where $A(p,r)$ is the $(n-1)$-dimensional volume of  the geodesic sphere $S(p,r)=\partial B(p,r)$ and where $C=\sup_{M^n}\arrowvert \nabla f\arrowvert<+\infty$.\\
 Now, $\int_{0}^rA(p,s)ds=VolB(p,r)$. Hence, the volume growth assumption tells us that there exists a sequence of radii $r_k\rightarrow +\infty$ such that the sequence $A(p,r_k)$ is bounded.
 
 Therefore, there exists $C=C(p,\nabla f)$ such that for any $k\in\mathbb{N}$,
 $$\int_{B(p,r_k)}Rd\mu\leq C.$$
 As $M^n=\cup_{k}B(p,r_k)$, $R$ is in $L^1(M^n,g)$.                                  

\hfill$\square$\\
 
We continue with a lemma concerning the "minimal" curvature decay of a steady gradient soliton with nonnegative Ricci-curvature: the scalar curvature decay is at most inversely proportional to the distance in an average sense. More precisely,

\begin{lemma}\label{minimal}
Let $(M^n,g,\nabla f)$ be a complete steady gradient soliton with $\Ric \geq 0$.
Then, for any $p\in M^n$ and every $r>0$,

$$\frac{1}{\vol B(p,r)}\int_{B(p,r)}R d\mu\leq \frac{C}{r},$$

where $C=C(M^n,\nabla f)$.
\end{lemma}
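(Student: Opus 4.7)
The plan is to combine the soliton identity $\Delta f = R$ with Stokes' theorem and Bishop--Gromov comparison. First, Lemma \ref{id}(iii) gives $|\nabla f|^2 + R = \mathrm{Cst}$; since $\Ric \geq 0$ forces $R \geq 0$, the constant bounds $|\nabla f|$ uniformly, so we may set $\Lambda := \sup_{M^n}|\nabla f| < +\infty$, a quantity depending only on $f$.

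Next, I would apply the divergence theorem to $\nabla f$ on the geodesic ball $B(p,r)$. Using Lemma \ref{id}(i),
\begin{equation*}
\int_{B(p,r)} R\,d\mu \;=\; \int_{B(p,r)} \Delta f\,d\mu \;=\; \int_{\partial B(p,r)} g(\nabla f,\nu)\,dA \;\leq\; \Lambda\,A(p,r),
\end{equation*}
where $\nu$ is the outward unit normal and $A(p,r)$ is the $(n-1)$-dimensional Hausdorff measure of the geodesic sphere $S(p,r)$. Since geodesic spheres are smooth only outside the cut locus, I would justify this step by either working on the smooth part and invoking the standard cut-locus measure-zero argument, or equivalently by an integration against $\chi_{B(p,r)}$ viewed distributionally (Gaffney's formula); this is a routine technicality.

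Finally, I would use Bishop--Gromov. Under $\Ric \geq 0$, the function $s \mapsto A(p,s)/s^{n-1}$ is non-increasing, hence for $0 \leq s \leq r$ we have $A(p,s) \geq A(p,r)(s/r)^{n-1}$. Integrating from $0$ to $r$ via the coarea identity $\vol B(p,r) = \int_0^r A(p,s)\,ds$ yields
\begin{equation*}
\vol B(p,r) \;\geq\; A(p,r)\int_0^r (s/r)^{n-1}\,ds \;=\; \frac{r}{n}\,A(p,r),
\end{equation*}
so $A(p,r) \leq n\,\vol B(p,r)/r$. Combining with the Stokes bound gives the claim with $C = n\Lambda$.

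The only real obstacle is the smoothness issue on $\partial B(p,r)$, but this is entirely standard; the proof is otherwise a two-line consequence of $\Delta f = R$ together with Bishop--Gromov. The dependence on $\nabla f$ in the constant enters exactly through the $L^\infty$ bound on $|\nabla f|$ supplied by Lemma \ref{id}(iii).
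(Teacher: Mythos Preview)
Your proof is correct and follows essentially the same route as the paper: apply Stokes to $\Delta f=R$ to bound the integral by $\sup_{M^n}|\nabla f|\cdot A(p,r)$, then use the Bishop--Gromov inequality $rA(p,r)\leq n\vol B(p,r)$ under $\Ric\geq 0$ to conclude with $C=n\sup_{M^n}|\nabla f|$. The only difference is that you derive the area--volume inequality from the monotonicity of $A(p,s)/s^{n-1}$ explicitly, whereas the paper simply cites it.
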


\textbf{Proof of Lemma \ref{minimal}.}
As in the proof of Lemma \ref{crois-boule-min} ,
$$\frac{1}{\vol B(p,r)}\int_{B(p,r)}R d\mu=\frac{1}{\vol B(p,r)}\int_{B(p,r)}\Delta f d\mu\leq
\frac{ \sup_{M^n}\arrowvert \nabla f\arrowvert}{r} \frac{rA(p,r)}{\vol B(p,r)}.$$
Now, by the Bishop-Gromov theorem ([\ref{Zhu}] for a recent and more general proof), 
$$\frac{rA(p,r)}{\vol B(p,r)}\leq n,$$
for any $p\in M^n$ and every $r>0$ since $M^n$ has nonnegative Ricci-curvature.
The result is immediate with $C:=n \sup_{M^n}\arrowvert \nabla f\arrowvert.$
\hfill$\square$\\

We end this section by a remark concerning the vanishing of the geometric invariants  $\lambda_{g,k}(M^n)$ introduced by Perelman [\ref{Perelman}] $(k=1)$ and by Junfang Li [\ref{Junfang}] $(k\geq 1)$.
These invariants are defined for a complete Riemannian manifold $(M^n,g)$ in the following way:
$$\lambda_{g,k}(M^n):=\inf \spec(-4\Delta+kR)=\inf_{\phi\in H_c^{1,2}(M^n)}\frac{\int_{M^n}4\arrowvert\nabla \phi\arrowvert^2 +kR\phi^2d\mu}{\int_{M^n}\phi^2},$$
where the infimum is taken over compactly supported functions in the Sobolev space $H^{1,2}(M^n)$ and where $k>0$.
A sufficient condition to have these invariants well-defined is: $\inf_{M^n} R>-\infty$.
For a complete steady gradient soliton $(M^n,g,\nabla f)$,  $\lambda_{g,k}(M^n)\geq 0$.
Moreover, if a steady soliton is compact, it is Ricci-flat since $\int_{M^n}Rd\mu=\int_{M^n}\Delta fd\mu=0$ by Stokes's theorem. Thus, in this case, $\lambda_{g,k}(M^n)= \inf\spec(-4\Delta)=0$.
What about the noncompact case?
Cheng and Yau [\ref{Cheng}] gave a necessary condition to have 
$\inf\spec(-\Delta)>0$ on a complete manifold.
\begin{prop} \label{crit}
Let $(M^n,g)$ be a complete Riemannian manifold. If the volume growth of geodesic balls is polynomial, i.e. if there exists $C>0$ and $k\geq 0$ such that $\vol B(p,r)\leq Cr^k$ for a fixed $p\in M^n$ and for any $r\geq 1$ then $\inf\spec(-\Delta)=0$.
\end{prop}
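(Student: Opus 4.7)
The approach is to construct explicit compactly supported test functions whose Rayleigh quotients tend to zero, and then invoke the variational characterization
$$\inf\spec(-\Delta)=\inf_{\phi\in H_c^{1,2}(M^n),\,\phi\not\equiv 0}\frac{\int_{M^n}|\nabla\phi|^2\,d\mu}{\int_{M^n}\phi^2\,d\mu}.$$
Since $\inf\spec(-\Delta)\geq 0$ automatically, it is enough to produce a sequence $(\phi_i)\subset H_c^{1,2}(M^n)$ with Rayleigh quotient going to $0$.

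The natural candidates are the annular cutoffs
$$\phi_r(x):=\max\bigl\{0,\min\{1,2-r_p(x)/r\}\bigr\},$$
which equal $1$ on $B(p,r)$, vanish outside $B(p,2r)$, and satisfy $|\nabla\phi_r|\leq 1/r$ almost everywhere. These yield
$$\frac{\int|\nabla\phi_r|^2d\mu}{\int\phi_r^2d\mu}\;\leq\;\frac{1}{r^2}\cdot\frac{\vol\bigl(B(p,2r)\setminus B(p,r)\bigr)}{\vol B(p,r)}.$$
If the doubling ratio $\vol B(p,2r)/\vol B(p,r)$ were bounded along a sequence $r_i\to+\infty$ by some constant $K$, this would give a Rayleigh quotient $\leq (K-1)/r_i^2\to 0$, finishing the proof.

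The main (and only nontrivial) step is therefore to extract such a sequence from the polynomial growth bound. I would argue by contradiction: suppose that for every $K>0$, there exists $R_0\geq 1$ beyond which $\vol B(p,2r)>K\,\vol B(p,r)$ for all $r\geq R_0$. Choosing $K=2^{k+1}$ and iterating $r\mapsto 2r$ starting from $R_0$ gives
$$\vol B(p,2^n R_0)\;>\;2^{n(k+1)}\vol B(p,R_0),$$
while the hypothesis $\vol B(p,r)\leq Cr^k$ implies $\vol B(p,2^nR_0)\leq C\,2^{nk}R_0^k$. Dividing yields $2^n<C R_0^k/\vol B(p,R_0)$, which fails for large $n$. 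Hence there exist arbitrarily large radii $r_i$ with $\vol B(p,2r_i)\leq 2^{k+1}\vol B(p,r_i)$.

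Plugging this sequence back into the computation for $\phi_{r_i}$ gives
$$\frac{\int|\nabla\phi_{r_i}|^2d\mu}{\int\phi_{r_i}^2d\mu}\;\leq\;\frac{2^{k+1}-1}{r_i^2}\;\longrightarrow\;0,$$
which concludes $\inf\spec(-\Delta)=0$. The only delicate point is really the dyadic doubling extraction; everything else is routine bookkeeping on the cutoffs.
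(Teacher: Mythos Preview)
Your argument is correct. Note that the paper does not actually give its own proof of this proposition; it is stated as a result of Cheng--Yau and only the adaptation to steady solitons (Proposition~\ref{lambda}) is proved in the text. That said, the technique the paper uses there is the same as yours: the identical annular cutoff $\phi$ (equal to $1$ on $B(p,R)$, linear on the annulus, zero outside $B(p,2R)$) yields, for the bare Laplacian,
\[
\lambda\,\vol B(p,R)\;\leq\;\frac{4}{R^{2}}\,\vol B(p,2R),
\]
and one then iterates along dyadic radii. The only organizational difference is that the Cheng--Yau/paper version argues the contrapositive directly---assuming $\lambda>0$ and iterating the inequality above to force $\vol B(p,2^{n}R_{0})\geq (\lambda R_{0}^{2}/4)^{n}\cdot\ldots$, hence super-polynomial growth---whereas you first extract a sequence $r_{i}\to\infty$ with bounded doubling ratio and then feed it into the Rayleigh quotient. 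The two are logically equivalent and rest on exactly the same two ingredients (the cutoff and the dyadic volume comparison), so there is no genuinely different idea here; your packaging is perhaps slightly cleaner in that it isolates the purely volumetric step before touching the spectrum.
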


Following closely their proof, we obtain the next result for steady gradient solitons.

\begin{prop}\label{lambda}
Let $(M^n,g,\nabla f)$ be a complete steady gradient soliton satisfying $\lambda_{g,k}(M^n)>0$ for some $k>0$.
Then the volume growth of geodesic balls is faster than polynomial, i.e., for any $m\geq 0$ and any $p\in M^n$, there exists $C=C(m,p,k,\nabla f)>0$ such that for $r$ large enough,
$$\vol B(p,r)\geq C r^m.$$

\end{prop}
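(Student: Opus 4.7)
The plan is to prove the contrapositive: if the volume growth of geodesic balls is at most polynomial along some unbounded sequence of radii, then $\lambda_{g,k}(M^n)=0$. The argument reduces the statement to Cheng--Yau's criterion (Proposition \ref{crit}) via the soliton identity $\Delta f = R$.

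First, I would record that $\sup_{M^n}|\nabla f|<+\infty$. Since $\lambda_{g,k}(M^n)$ is only well-defined when $\inf_{M^n}R>-\infty$, the conservation law $|\nabla f|^2+R=\mathrm{Cst}$ of Lemma \ref{id}(iii) forces $|\nabla f|$ to be bounded. Next, for any compactly supported $\phi\in H^{1,2}_c(M^n)$, integrating by parts the identity $R=\Delta f$ against $\phi^2$ yields
\begin{equation*}
\int_{M^n}R\phi^2\,d\mu=-2\int_{M^n}\phi\,g(\nabla f,\nabla\phi)\,d\mu,
\end{equation*}
so that Cauchy--Schwarz gives the key estimate
\begin{equation*}
\left|\int_{M^n}R\phi^2\,d\mu\right|\leq 2\sup_{M^n}|\nabla f|\Bigl(\int_{M^n}\phi^2\,d\mu\Bigr)^{1/2}\Bigl(\int_{M^n}|\nabla\phi|^2\,d\mu\Bigr)^{1/2}.
\end{equation*}
In other words, the a priori troublesome scalar curvature term is controlled by the Dirichlet energy of $\phi$ alone.

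Now assume the volume growth fails to be faster than every polynomial, so that $\vol B(p,r)\leq Cr^m$ for some $m\geq 0$ along an unbounded sequence $r_j\to+\infty$. Proposition \ref{crit} then supplies compactly supported test functions $(\phi_j)_j$ with $\int_{M^n}\phi_j^2\,d\mu=1$ and $\int_{M^n}|\nabla\phi_j|^2\,d\mu\to 0$. Substituting these into the Rayleigh quotient defining $\lambda_{g,k}(M^n)$ and using the estimate above,
\begin{equation*}
\lambda_{g,k}(M^n)\leq 4\int_{M^n}|\nabla\phi_j|^2\,d\mu+2k\sup_{M^n}|\nabla f|\Bigl(\int_{M^n}|\nabla\phi_j|^2\,d\mu\Bigr)^{1/2}\xrightarrow[j\to\infty]{}0,
\end{equation*}
which combined with the nonnegativity $\lambda_{g,k}(M^n)\geq 0$ already noted for steady gradient solitons forces $\lambda_{g,k}(M^n)=0$, contradicting the hypothesis. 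No step presents a real obstacle; the only subtlety is that one must apply Cheng--Yau's construction to a merely subsequential polynomial growth bound, but their test functions are built from annular cutoffs of the distance function and adapt immediately to this weaker situation.
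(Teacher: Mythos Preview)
Your argument is correct and uses the same raw ingredients as the paper---the annular cutoff test functions and the integration by parts $\int R\phi^2=-2\int\phi\,g(\nabla f,\nabla\phi)$---but the packaging is genuinely different. The paper works \emph{directly}: plugging the cutoff into the Rayleigh quotient for $-4\Delta+kR$ yields the doubling inequality $\lambda_{g,k}\vol B(p,R)\leq C R^{-1}\vol B(p,2R)$, and the author then iterates this, using Munteanu--Sesum's linear lower bound (Lemma~\ref{sesum}) as the base case to climb one polynomial degree at a time. You instead reduce to the bare Laplacian: your Cauchy--Schwarz bound shows $\lambda_{g,k}\leq 4\mathcal R(\phi)+2k\sup|\nabla f|\sqrt{\mathcal R(\phi)}$ for every test function, hence $\lambda_{g,k}>0$ forces $\inf\spec(-\Delta)>0$, and then the volume conclusion is Cheng--Yau's. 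This buys you a cleaner logical structure and lets you bypass Lemma~\ref{sesum} entirely.

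The one point to tighten is your appeal to Proposition~\ref{crit}. As stated there, the hypothesis is a polynomial bound for \emph{all} $r\geq 1$, whereas the negation you need is only subsequential. You are right that the proof adapts, but it is not the test functions that adapt---rather, the doubling inequality $\vol B(p,2R)\geq \lambda_0 R^{2}\vol B(p,R)$, which holds for \emph{every} $R$ once $\lambda_0=\inf\spec(-\Delta)>0$, iterates to give $\vol B(p,r)/r^m\to\infty$ for every $m$ and every $r$, ruling out even a subsequential polynomial bound. So you must open the Cheng--Yau box rather than cite Proposition~\ref{crit} verbatim; once you do, the argument is complete.
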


Before beginning the proof, we state a corollary which follows from Proposition \ref{lambda} and  Bishop-Gromov theorem.

\begin{coro}
Let $(M^n,g,\nabla f)$ be a complete steady gradient soliton with nonnegative Ricci-curvature.
Then, for any $k>0$,

$$\lambda_{g,k}(M^n)=0.$$

\end{coro}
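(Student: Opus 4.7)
The plan is to argue by contradiction, using the two-sided volume comparison provided by Proposition \ref{lambda} on one side and the Bishop--Gromov theorem on the other.

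First I would recall that on any complete steady gradient soliton one has $\lambda_{g,k}(M^n) \geq 0$, as noted in the paragraph preceding Proposition \ref{crit}. Hence it suffices to rule out the strict inequality $\lambda_{g,k}(M^n) > 0$. So I assume, for contradiction, that $\lambda_{g,k}(M^n) > 0$ for some $k > 0$.

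Next, apply Proposition \ref{lambda}: under this hypothesis, for every exponent $m \geq 0$ and every basepoint $p \in M^n$, there exists $C = C(m,p,k,\nabla f) > 0$ such that
$$\vol B(p,r) \geq C r^m \quad \text{for all $r$ large enough.}$$
In particular, choosing $m = n+1$ gives superpolynomial growth of order greater than $n$.

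On the other hand, since $(M^n,g)$ has nonnegative Ricci curvature, the Bishop--Gromov volume comparison theorem yields
$$\vol B(p,r) \leq \omega_n r^n \quad \text{for all } r > 0,$$
where $\omega_n$ denotes the volume of the Euclidean unit ball in $\mathbb{R}^n$. Comparing the two bounds at large $r$ with $m = n+1$ immediately produces a contradiction: one would need $C r^{n+1} \leq \omega_n r^n$ for all sufficiently large $r$, which fails as $r \to +\infty$.

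The argument is essentially a one-line combination, so there is no real obstacle; the only subtlety is to remember that the nonnegativity $\lambda_{g,k}(M^n) \geq 0$ on steady gradient solitons has already been established in the paragraph introducing these invariants, so one only needs to exclude positivity. Bishop--Gromov supplies the polynomial upper bound that Proposition \ref{lambda} forbids, forcing $\lambda_{g,k}(M^n) = 0$.
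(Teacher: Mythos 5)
Your argument is correct and is exactly the one the paper intends: the corollary is stated as following from Proposition \ref{lambda} together with the Bishop--Gromov comparison, which is precisely your contradiction between superpolynomial growth and the Euclidean upper bound $\vol B(p,r)\leq \omega_n r^n$ under $\Ric\geq 0$. Nothing further is needed.
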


\textbf{Proof of Proposition \ref{lambda}}.
By assumption, we have 
$$\lambda_{g,k}(M^n)\int_{M^n}\phi^2d\mu\leq\int_{M^n}4\arrowvert \nabla \phi\arrowvert^2+kR\phi^2d\mu,$$
for any function with compact support in $H^{1,2}(M^n)$.
Define the following function as in [\ref{Cheng}]:
$$\phi(x)=\left\{\begin{array}{rl}
1 & \mbox{on $B(p,r)$}\\

(2R-r_p(x))/R & \mbox{on $B(p,2R)\setminus B(p,R)$}\\

0 & \mbox{on $M^n\setminus B(p,2R)$}\end{array}\right.$$
for $p\in M^n$ fixed and $R\geq 1$.
The previous inequality applied to this function becomes,
$$\lambda_{g,k}(M^n) \vol B(p,R)\leq 4R^{-2}\vol B(p,2R)+k\int_{M^n}\Delta f \phi^2d\mu.$$
Now,
$$\int_{M^n}\Delta f \phi^2d\mu=-2\int_{M^n}g(\nabla f ,\nabla \phi)\phi d\mu\leq2\sup_{M^n}\arrowvert \nabla f\arrowvert R^{-1}\vol B(p,2R).$$
Thus, 
$$\lambda_{g,k}(M^n) \vol B(p,R)\leq C(M^n,\nabla f)R^{-1}\vol B(p,2R).$$
By lemma \ref{sesum}, there exists $c_p>0$ such that 
$\vol B(p,R)\geq c_pR$, for any $R\geq 1$ .
As $\lambda_{g,k}(M^n)>0$, one has,
$$\vol B(p,R)\geq CR^2,$$
for $R\geq 2$ and $C=C(p,k,\nabla f)$.
The result follows by iterating this argument.

\hfill$\square$\\

{\raggedright Institut Fourier, Université de Grenoble I, UMR 5582 CNRS-UJF,\\
38402, Saint-Martin d'Hères, France.\\
alix.deruelle@ujf-grenoble.fr}

\end{document}